\numberwithin{equation}{section}
\numberwithin{figure}{section}
\theoremstyle{plain}
\newtheorem{thm}{\protect\theoremname}
  \theoremstyle{plain}
  \newtheorem{prop}[thm]{\protect\propositionname}
  \theoremstyle{remark}
  \newtheorem{rem}[thm]{\protect\remarkname}
  \theoremstyle{plain}
  \newtheorem{cor}[thm]{\protect\corollaryname}
  \theoremstyle{definition}
  \newtheorem{example}[thm]{\protect\examplename}
\theoremstyle{definition}
  \newtheorem{notation}[thm]{Notation}
\theoremstyle{plain}
  \newtheorem{lemma}[thm]{Lemma}
\theoremstyle{plain}
  \newtheorem*{thm*}{Theorem}
  \providecommand{\corollaryname}{Corollary}
  \providecommand{\examplename}{Example}
  \providecommand{\propositionname}{Proposition}
  \providecommand{\remarkname}{Remark}
\providecommand{\theoremname}{Theorem}
\newtheorem*{LCP}{Laurent Cancellation Problem}
\newcommand{\Ker}{\textrm{Ker}}
\newcommand{\C}{\mathbb{C}}
\newcommand{\Z}{\mathbb{Z}}
\newcommand{\kk}{\mathbf{k}}
\begin{document}
\subjclass[2010]{14R05, 14R25, 14L30}
\keywords{Laurent Cancellation Problem; hypersurfaces; affine-ruled varieties} 

\author{Adrien Dubouloz and Pierre-Marie Poloni} 
\address{Adrien Dubouloz \\ IMB UMR5584, CNRS, Univ. Bourgogne Franche-Comt\'e, F-21000 Dijon,France.}
\email{adrien.dubouloz@u-bourgogne.fr} 
\address{ Pierre-Marie Poloni\\ Universit\"at Bern \\ Mathematisches Institut \\ Sidlerstrasse 5 \\ CH-3012 Bern \\ Switzerland\\} 
\email{pierre.poloni@math.unibe.ch}
\thanks{This research was supported in part by the ANR Grant BirPol ANR-11-JS01-004-01. The second author was supported by the Universit\"at Basel Forschungsfonds.}

\title{Affine-ruled varieties without the Laurent cancellation property}
\begin{abstract}
We describe a method to construct hypersurfaces of the complex affine $n$-space with isomorphic $\C^*$-cylinders. Among these hypersurfaces, we find new explicit counterexamples to the Laurent Cancellation Problem, i.e.~hypersurfaces that are nonisomorphic, although their $\C^*$-cylinders are isomorphic as abstract algebraic varieties. We also provide examples of nonisomorphic varieties $X$ and $Y$ with isomorphic cartesian squares $X\times X$ and $Y\times Y$.
\end{abstract}
\maketitle

\section*{Introduction}

In this paper, we consider the following cancellation problem:
\begin{LCP} Suppose that the $\C^*$-cylinders $X\times\C^{\ast}$ and $Y\times\C^*$ over two complex affine varieties $X$ and $Y$ are isomorphic. Does it follow that $X$ and $Y$ are isomorphic as abstract algebraic varieties?\end{LCP}

This question can of course be equivalently reformulated as the question of the uniqueness of the coefficient ring in a Laurent polynomial ring. Indeed, if we let $X=\text{Spec}(A)$ and $Y=\text{Spec}(B)$ for some finitely generated complex algebras, then the  Laurent Cancellation Problem simply asks whether having isomorphic Laurent polynomial rings $A[t,t^{-1}]\simeq B[t,t^{-1}]$ implies that $A$ and $B$ are isomorphic themselves. 

The answer  is known to be positive in many cases. First of all, it is easy to see that tori have the Laurent Cancellation property, i.e.~that  the Laurent Cancellation Problem has a positive answer, when $X$ is isomorphic to an algebraic torus $(\C^*)^d$ (see e.g.~Lemma 4.5 in \cite{BD}).  Then,  Gene Freudenburg \cite{Freudenburg} has proved that affine curves (i.e.~complex affine algebraic varieties $X$ of dimension 1) have also the Laurent Cancellation property. Moreover, using ideas similar to that for Iitaka and Fujita's strong cancellation theorem  \cite{IF}, the first author provided in \cite{Dubouloz-Tori} an affirmative answer for large classes of varieties. In particular, Laurent Cancellation does hold if $X$ is of log-general type \cite[Proposition 2]{Dubouloz-Tori} or if $X$ is a smooth factorial affine surface with logarithmic Kodaira dimension different from $1$ (see \cite[Proposition 12]{Dubouloz-Tori}).  On the other hand,  counterexamples were given in \cite{Dubouloz-Tori} in the form of pairs of nonisomorphic smooth factorial affine varieties $X$ and $Y$ of dimension $d\geq 2$ and logarithmic Kodaira dimension $d-1$ such that $X\times\C^*$ and $Y\times\C^*$ are isomorphic \cite[Propositions 6 and 9]{Dubouloz-Tori}.   

The main purpose of the present paper is to provide a general method to construct explicit counterexamples to the Laurent Cancellation Problem. All these  examples will be realized as hypersurfaces of affine space  $\mathbb{A}^{n+1} $ that are defined by an equation of the form $t^{\ell}f(x_1,\ldots,x_n)=1$, where $f$ is a regular function which is semi-invariant for some action of the algebraic multiplicative group $\mathbb{G}_{m}$ on $\mathbb{A}^n=\textrm{Spec}(\C[x_1,\ldots,x_n])$. We will show that some of the examples in \cite{Dubouloz-Tori} can actually be reinterpreted as being particular cases of our construction, and will also obtain new examples of varieties that fail the Laurent Cancellation property, notably   affine algebraic varieties of negative logarithmic Kodaira dimension. 

As a byproduct of our construction, we will also get explicit examples illustrating the following result. 

\begin{thm*} There exist smooth affine algebraic varieties (of every dimension $d\geq2$) $X$ and $Y$ which are  nonisomorphic, although their cartesian product with themselves, $X\times X$ and $Y\times Y$, are isomorphic. 
\end{thm*}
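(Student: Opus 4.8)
The plan is to produce $X$ and $Y$ among the hypersurfaces furnished by our construction, and then to upgrade the isomorphism of $\C^{*}$-cylinders into an isomorphism of cartesian squares by letting each variety itself play the role of the cylinder factor. Fix an action $\sigma$ of $\mathbb{G}_{m}$ on $\mathbb{A}^{n}=\mathrm{Spec}(\C[x_{1},\ldots,x_{n}])$ together with a regular function $f$ that is $\sigma$-semi-invariant of weight $w$, and choose $f$ and $w$ so that, for every positive integer $k$, the hypersurface $V_{k}=\{t^{k}f(x_{1},\ldots,x_{n})=1\}\subset\mathbb{A}^{n+1}$ is smooth of dimension $n$. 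Pick two distinct positive integers $\ell$ and $\ell'$, both coprime to $w$ and such that $V_{\ell}\not\cong V_{\ell'}$, and set $X=V_{\ell}$ and $Y=V_{\ell'}$; by our construction $X\times\C^{*}\cong Y\times\C^{*}$. For the final statement we take $n=d$.

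Next, inspecting the construction, one sees that the only features of the factor $\C^{*}=\mathrm{Spec}(\C[s,s^{-1}])$ that enter the proof of $V_{\ell}\times\C^{*}\cong V_{\ell'}\times\C^{*}$ are that it is an affine variety carrying a $\mathbb{G}_{m}$-action for which $s$ is an invertible semi-invariant whose weight is coprime to $\ell$ and to $\ell'$; granting this, the same argument (monomial substitutions in the unit variables combined with $\mathbb{G}_{m}$-rescaling of the $x_{i}$) gives $V_{\ell}\times Z\cong V_{\ell'}\times Z$ for any affine variety $Z$ equipped with such a structure. The crucial point is that each $V_{k}$ with $\gcd(k,w)=1$ is itself of this kind: the formula $\lambda\cdot(t,x_{1},\ldots,x_{n})=(\lambda^{w}t,\sigma_{\lambda^{-k}}(x_{1},\ldots,x_{n}))$ defines a $\mathbb{G}_{m}$-action on $V_{k}$ for which the invertible function $t$ is semi-invariant of weight $w$, coprime to $\ell$ and $\ell'$ by our choice. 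Applying the generalized construction twice — first with $Z=X$ in order to replace the remaining factor $V_{\ell}$ by $V_{\ell'}$, and then with $Z=Y$ to do likewise for the other factor — yields
\[
X\times X=V_{\ell}\times V_{\ell}\;\cong\;V_{\ell'}\times V_{\ell}\;\cong\;V_{\ell'}\times V_{\ell'}=Y\times Y,
\]
which is the desired isomorphism. For an arbitrary dimension $d\geq2$ one may either work directly with $f\in\C[x_{1},\ldots,x_{d}]$, or take $f$ to depend only on $x_{1},x_{2}$ and regard $x_{3},\ldots,x_{d}$ as inert coordinates, so that $V_{k}\cong\widetilde{V}_{k}\times\mathbb{A}^{d-2}$ for the two-dimensional model $\widetilde{V}_{k}$; the isomorphism of squares in dimension $d$ then follows from the surface case after multiplying through by $\mathbb{A}^{2d-4}$.

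The step I expect to be essentially formal is the production of the isomorphism $X\times X\cong Y\times Y$, once the basic construction has been set up and recognized to apply to auxiliary factors other than $\C^{*}$. The real difficulty lies in the non-isomorphism $X\not\cong Y$: one must exhibit an invariant — for instance extracted from the structure at infinity, the Makar-Limanov invariant, or the divisor class group — distinguishing $V_{\ell}$ from $V_{\ell'}$, and, in the padded higher-dimensional version, one must ensure that this invariant survives the product with $\mathbb{A}^{d-2}$. Checking that $f$ and $w$ can be chosen so that all the $V_{k}$ are smooth is routine, but the non-isomorphism computation is the genuine obstacle; it is carried out in the main body of the paper.
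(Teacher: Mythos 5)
Your reduction of the theorem to the claim that $V_{\ell}\times Z\cong V_{\ell'}\times Z$ for every affine variety $Z$ carrying a $\mathbb{G}_{m}$-action with an invertible semi-invariant is where the argument breaks down; this is not what the cylinder argument proves, and ``the same argument'' does not give it. In the proof of Proposition \ref{prop:isos-explicites}, the coordinate $u$ of the extra factor $\mathbb{A}_{*}^{1}$ is sent to an arbitrary monomial $t^{\alpha}u^{\beta}$ coming from an $\mathrm{SL}_{2}(\mathbb{Z})$-automorphism of the two-torus $\mathrm{Spec}(\C[t^{\pm1},u^{\pm1}])$; it is precisely this freedom that disappears when the auxiliary factor is one of the hypersurfaces $V_{k}=\widetilde{X}_{f,k}$. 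On such a factor, the only way the argument can move the distinguished unit $t_{2}$ while preserving the defining equation $t_{2}^{k}f=1$ is to twist by the $\mathbb{G}_{m}$-action along a unit $u$, which replaces $t_{2}$ by $u^{m}t_{2}$. Consequently, in the induced transformation of the rank-two lattice spanned by the units $t_{1},t_{2}$ of $V_{\ell}\times V_{k}$, the row corresponding to $t_{2}$ is congruent to $(0,1)$ modulo $m$, while the compatibility with the first equation forces the row of $t_{1}$ to be $(a,j)$ with $j\equiv 0 \pmod m$ and $a\ell\equiv\ell'\pmod m$; the invertibility (determinant $\pm1$) then gives $a\equiv\pm1\pmod m$, hence $\ell'\equiv\pm\ell\pmod m$ --- exactly the case you excluded. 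So the asserted intermediate isomorphism $V_{\ell}\times V_{\ell}\cong V_{\ell'}\times V_{\ell}$ is unproved, and the chain $V_{\ell}\times V_{\ell}\cong V_{\ell'}\times V_{\ell}\cong V_{\ell'}\times V_{\ell'}$ collapses. A further warning sign: were your claim true, isomorphic squares (and indeed $V_{\ell}\times Z\cong V_{\ell'}\times Z$ for many $Z$) would follow for all $\ell,\ell'$ coprime to $m$ with no arithmetic restriction, which is much stronger than anything the construction yields.

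The paper's actual route changes the exponents of \emph{both} factors simultaneously: Lemma \ref{lem:squares} gives $\widetilde{X}_{f,\ell}\times\widetilde{X}_{g,\ell'}\cong\widetilde{X}_{f,a\ell}\times\widetilde{X}_{g,b\ell'}$ only under the congruence $ab\equiv1\pmod{m^{2}}$, and to obtain isomorphic squares one needs in addition $a+b\equiv0\pmod m$ so that $\widetilde{X}_{f,a}\cong\widetilde{X}_{f,b}$ by Proposition \ref{prop:isos-explicites} (this is Corollary \ref{cor:squares}). These two conditions amount to $a^{2}\equiv-1\pmod m$, which is why the examples are built with $m=5$ (taking $a=2$, $b=13$, giving $X_{n,5,1}$ and $X_{n,5,2}$ in every dimension $d=n+2\geq3$) and $m=10$ (taking $a=3$, $b=67$, giving the surfaces $t(x^{2}+y^{5})=1$ and $t^{3}(x^{2}+y^{5})=1$). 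Your deferral of the nonisomorphism $X\not\cong Y$ to the body of the paper is fine (Proposition \ref{prop:case25} and Theorem \ref{main-thm}), but note that the paper covers dimension $d\geq3$ with genuinely different hypersurfaces rather than by padding a surface with $\mathbb{A}^{d-2}$; your padded variant would require, in addition, that the nonisomorphism survives taking products with affine spaces, which is itself a cancellation-type problem left open in your sketch. Finally, the coprimality you impose on the weight of the auxiliary unit is not the relevant condition; the arithmetic that matters takes place modulo $m$ and $m^{2}$.
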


\smallskip

The paper is organized as follows:

In the first section, we study hypersurfaces $\widetilde{X}_{f,\ell}$ of $\C^{n+1}$ that are defined by the equation $t^{\ell}f=1$, where $f\in\C[x_1,\ldots,x_n]$ is a polynomial which is semi-invariant for an effective algebraic action of $\C^*$ on $\C^n$. We establish sufficient conditions under which the $\mathbb{C}^{*}$-cylinders $\widetilde{X}_{f,\ell}\times\mathbb{C}^{*}$ and $\widetilde{X}_{f,\ell'}\times\mathbb{C}^{*}$ are isomorphic. We also develop a general strategy to prove that two given $\widetilde{X}_{f,\ell}$ and $\widetilde{X}_{f,\ell'}$ are not isomorphic.

The second section is devoted to the case of surfaces $\widetilde{X}_{f,\ell}$ where $f\in\mathbb{C}[x,y]$ is of the form $f=x^p+y^q$. Our main result is the following.
\begin{thm*} Let $p\geq2$ be a prime number, let $q\geq3$ be an
integer relatively prime with $p$ and let $\ell\geq2$ be relatively prime with $m=pq$. Then, the smooth factorial affine surfaces of respective equation $t(x^p+y^q)=1$ and $t^{\ell}(x^p+y^q)=1$ are  not isomorphic, although they have 
isomorphic $\C^{*}$-cylinders. 
\end{thm*}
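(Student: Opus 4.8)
The plan is to prove the two statements separately, since the isomorphism of the $\mathbb{C}^*$-cylinders and the non-isomorphism of the surfaces require completely different techniques.

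For the cylinder isomorphism, I would invoke the general machinery from Section 1. The polynomial $f=x^p+y^q$ is semi-invariant for the hyperbolic $\mathbb{C}^*$-action on $\mathbb{A}^2$ with weights $(q,p)$ (up to normalization), of weight $m=pq$. So $\widetilde{X}_{f,1}$ and $\widetilde{X}_{f,\ell}$ fall into the family studied there, and the sufficient conditions established in Section 1 for $\widetilde{X}_{f,\ell}\times\mathbb{C}^*\simeq\widetilde{X}_{f,\ell'}\times\mathbb{C}^*$ should apply once one checks the numerical hypothesis relating $\ell$, $\ell'=1$ and $m$; the coprimality of $\ell$ with $m=pq$ is exactly what is needed. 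I expect this half to be essentially a verification that the hypotheses of the Section 1 isomorphism criterion are met, together with the observation that $\gcd(\ell,m)=1$ makes the relevant construction work.

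The substantive part is showing $\widetilde{X}_{f,1}\not\simeq\widetilde{X}_{f,\ell}$ when $\ell\geq 2$. Here I would follow the ``general strategy'' alluded to at the end of Section 1, which presumably compares the two surfaces via birational invariants at infinity. Both surfaces are smooth factorial, of negative or zero logarithmic Kodaira dimension (they are $\mathbb{C}^*$-fibered over a curve through the projection to the $t$-coordinate, hence affine-ruled), so the coarse invariants that settle many cancellation questions are useless here, which is precisely why these are interesting examples. Instead, the discriminant data of the $\mathbb{C}^*$-fibration must be used: the morphism $\widetilde{X}_{f,\ell}\to\mathbb{A}^1_t\setminus\{0\}$, $(x,y,t)\mapsto t$, has general fiber the affine plane curve $x^p+y^q=t^{-\ell}$, a smooth curve of genus $g=(p-1)(q-1)/2$ (using $p$ prime, $q$ coprime to $p$), with one place at infinity; and the special structure over $t=0$, where the fiber degenerates. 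The Euler characteristic, the number and type of singular fibers, or more refined invariants such as the isomorphism type of the boundary divisor in a minimal log-resolution, should distinguish $\ell=1$ from $\ell=\ell'\geq 2$.

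Concretely, the key computation would be to produce a minimal smooth completion $\overline{X}_{f,\ell}$ of $\widetilde{X}_{f,\ell}$ and analyze the dual graph of the boundary $\overline{X}_{f,\ell}\setminus\widetilde{X}_{f,\ell}$, which is an invariant of $\widetilde{X}_{f,\ell}$ (up to the standard moves at $(-1)$-curves in the boundary). The hypersurface $t^\ell f=1$ is naturally completed by homogenizing and resolving; the exponent $\ell$ enters through the multiplicity of the divisor at $\{t=0\}$ and at $\{t=\infty\}$, producing a chain of rational curves whose self-intersection sequence depends on the continued-fraction expansion of quantities built from $\ell$, $p$, $q$. Comparing these weighted dual graphs for $\ell=1$ versus $\ell\geq 2$, with $\gcd(\ell,pq)=1$ and $p$ prime, $q\geq 3$, should yield the non-isomorphism. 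The main obstacle will be organizing this boundary computation cleanly --- correctly performing the sequence of blow-ups needed to resolve the singularities of the naive completion coming from the interplay of the $x^p+y^q$ term with the $t^\ell$ factor, and then extracting from the resulting dual graph a genuine invariant that is sensitive to $\ell$ but insensitive to the choice of completion. A secondary subtlety is verifying factoriality and smoothness of $\widetilde{X}_{f,\ell}$ in the stated range, though that is a direct Jacobian-criterion and divisor-class-group check exploiting $\gcd(\ell, pq)=1$.
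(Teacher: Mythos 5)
The first half of your plan (the cylinder isomorphism) is exactly the paper's argument: $f=x^p+y^q$ is semi-invariant of weight $m=pq$ for the action with weights $(q,p)$, and Proposition \ref{prop:isos-explicites} applies since $\gcd(\ell,m)=1=\gcd(1,m)$. But two of your framing claims are wrong: these surfaces have logarithmic Kodaira dimension $1$, not $\leq 0$ (this is computed in Section 2: $X_f\simeq C\times\mathbb{A}^1_*$ with $C$ the genus-$\frac{(p-1)(q-1)}{2}$ Fermat curve, and $\kappa$ is invariant under the \'etale covers $\widetilde{X}_{f,\ell}\to X_f$), and the projection to the $t$-coordinate is not a $\mathbb{C}^*$-fibration --- its fibers are the genus-$g$ curves you yourself describe later; the $\mathbb{A}^1_*$-ruling is the log-canonical fibration over $\mathbb{A}^1$ (cf.\ Lemma \ref{lem:case23}), not the $t$-projection.

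The genuine gap is in the non-isomorphism half: you defer the entire content of the theorem to an unperformed computation of boundary dual graphs and assert the outcome ``should'' distinguish $\ell=1$ from $\ell\geq2$, with no mechanism making the invariant sensitive to exactly the right data. Note that by Proposition \ref{prop:isos-explicites}(1) the surfaces $\widetilde{X}_{f,\ell}$ and $\widetilde{X}_{f,1}$ \emph{are} isomorphic whenever $\ell\equiv\pm1\pmod m$ (e.g.\ $\ell=m-1$, which is $\geq2$ and coprime to $m$; the theorem is clearly intended with $\ell\not\equiv\pm1\pmod m$), so any invariant-at-infinity argument must depend precisely on the residue class of $\pm\ell$ modulo $m$, and your blanket ``$\ell=1$ versus $\ell\geq2$'' comparison cannot be right as stated. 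Moreover, the paper's proof shows that the actual obstruction is arithmetic over the non-closed field $\mathbb{C}(t)$ rather than combinatorial: Lemma \ref{lemma:units} pins down $\mathcal{O}(\widetilde{X}_{f,\ell})^*$, so by Proposition \ref{lemma:fibre-generique} any isomorphism respects the $t$-fibration up to $t\mapsto at^{\pm1}$ and induces a $\mathbb{C}(t)$-isomorphism between the generic fibers $Y_1=\{x^p+y^q=t^{-1}\}$ and $Y_j$ for $j=\ell$ or $\ell''\equiv-\ell\pmod m$; these are genus-$\geq2$ curves which are degree-$p$ cyclic covers of the line via $\mathrm{pr}_y$, any isomorphism descends to the base (Gonzalez-Diez/Nakajo over $\overline{\mathbb{C}(t)}$ plus uniqueness to descend to $\mathbb{C}(t)$, as in Proposition \ref{prop:case25}), and matching the branch loci $\{y^q=t^{-1}\}$ and $\{y^q=t^{-j}\}$ would force a $q$-th root of $t^{1-j}$ in $\mathbb{C}(t)$, which does not exist. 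Since all the $\widetilde{X}_{f,\ell}$ become isomorphic after the cyclic base change $v\mapsto v^m$ and already have isomorphic cylinders, it is far from clear that minimal SNC boundary graphs differ at all; even if they do for the relevant $\ell$, establishing this for all $(p,q,\ell)$ via continued-fraction data would be a substantial computation that your sketch does not begin. As it stands, the proposal is a plan whose crucial step is missing.
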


In Section 3, we focus on the case of varieties of negative logarithmic Kodaira dimension. We first improve a result of \cite{Dubouloz-Tori} by showing that Laurent Cancellation does hold for all smooth affine surfaces of negative Kodaira dimension, and then construct explicit higher dimensional examples that fail Laurent Cancellation.

Finally, examples of nonisomorphic affine varieties with isomorphic squares are given in Section 4.

\smallskip
\noindent {\bf Acknowledgments.} We are grateful to J\'er\'emy Blanc and Jean-Philippe Furter for their helpful suggestions about Proposition \ref{prop:carre-courbes}.



\section{Semi-invariants of $\mathbb{G}_m$-actions and $\mathbb{A}^1_*$-cylinders}

Unless otherwise specified, we will work over the field $\mathbb{C}$ of complex numbers. We will denote by $\mathbb{A}^n=\mathbb{A}_{\C}^n=\textrm{Spec}(\C[x_1,\ldots,x_n])$ the affine $n$-space and by $\mathbb{A}^1_{*}=\mathbb{A}^1\setminus\{0\}=\textrm{Spec}(\C[t,t^{-1}])$ the affine line minus the origin. 

\subsection{Isotrivial fibrations associated to semi-invariant regular functions}

Let $X$ be a complex affine variety endowed with an effective action $\mu:\mathbb{G}_{m}\times X\rightarrow X$ of the multiplicative group $\mathbb{G}_{m}=\mathbb{G}_{m,\mathbb{C}}=\mathrm{Spec}(\mathbb{C}[t^{\pm1}])$. Its coordinate ring $A=\mathcal{O}(X)$ is then equipped with a natural $\mathbb{Z}$-grading by the subspaces $A_{m}=\left\{ f\in A,\, f(\mu(t,x))=t^{m}f(x)\,\forall x\in X\right\} $ of semi-invariants of weight $m\in\mathbb{Z}$. 
\begin{notation}\label{notation:X_fl} Given a semi-invariant $f$ of weight $m\neq0$, we denote by $X_{f}$ the principal open subset of $X$ where $f$ does not vanish and, for every $\ell\geq1$,  by $\widetilde{X}_{f,\ell}$ the closed subvariety of $X\times\mathbb{G}_{m}=\mathrm{Spec}(A[t^{\pm1}])$ defined by the equation $t^{\ell}f=1$.  Note that $\widetilde{X}_{f,\ell}$ can also be seen as the closed subvariety of $X\times\mathbb{A}^{1}$ given by the same equation $t^{\ell}f=1$. 
\end{notation}

The restriction to $\widetilde{X}_{f,\ell}$ of the first projection $\mathrm{pr}_{X}$ is an \'etale Galois cover $\widetilde{X}_{f,\ell}\rightarrow X_{f}\simeq\widetilde{X}_{f,1}$ with Galois group $\mathbb{Z}_{\ell}$. On the other hand, the second projection $\mathrm{pr}_{\mathbb{G}_{m}}$ restricts on $\widetilde{X}_{f,\ell}$ to an isotrivial fibration $\rho_{f,\ell}:\widetilde{X}_{f,\ell}\rightarrow\mathbb{G}_{m}$ with fiber $F=f^{-1}(1)=\mathrm{Spec}(A/(f-1))$, which becomes trivial after the finite \'etale base change $\psi:C=\mathrm{Spec}(\mathbb{C}[v^{\pm1}])\rightarrow\mathbb{G}_{m}$, $v\mapsto v^{m}$. Indeed, since $f$ is a semi-invariant of weight $m$, the morphism  \[ F\times C\rightarrow\widetilde{X}_{f,\ell}\times_{\mathbb{G}_{m}}C,\:(x,v)\mapsto((\mu(v^{-\ell},x),v^{m}),v) \] is an isomorphism of schemes over $C$.  
\begin{prop}\label{prop:isos-explicites} With the notation above, the following hold:
\begin{enumerate}
\item If $\ell'$ is congruent to $\ell$ or $-\ell$ modulo $m$, then the fibrations $\rho_{f,\ell}:\widetilde{X}_{f,\ell}\rightarrow\mathbb{G}_{m}$ and $\rho_{f,\ell'}:\widetilde{X}_{f,\ell'}\rightarrow\mathbb{G}_{m}$ are isomorphic up to an automorphism of $\mathbb{G}_{m}$.  
\item If the residue classes modulo $m$ of $\ell$ and $\ell'$ generate the same subgroup of $\mathbb{Z}_{m}$, i.e.~if $\gcd(\ell,m)=\gcd(\ell',m)$, then $\widetilde{X}_{f,\ell}\times\mathbb{A}_{*}^{1}$ and $\widetilde{X}_{f,\ell'}\times\mathbb{A}_{*}^{1}$ are isomorphic as abstract algebraic varieties.
\end{enumerate}
\end{prop}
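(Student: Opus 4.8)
The plan is to construct explicit isomorphisms in both cases by exploiting the $\mathbb{G}_m$-action on $X$ and the freedom to rescale the $\mathbb{A}^1_*$-coordinate.

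For part (1), assume first $\ell' \equiv \ell \pmod m$, say $\ell' = \ell + km$ for some $k \in \mathbb{Z}$. The idea is that multiplying the $t$-coordinate by a suitable power of the semi-invariant $f$ (which is possible since $f$ is invertible on $\widetilde{X}_{f,\ell}$, being equal to $t^{-\ell}$ there) should trade powers of $t$ for the factor $t^{\ell}f = 1$. Concretely, I would write down the morphism $\widetilde{X}_{f,\ell}\to X\times\mathbb{G}_m$ sending $(x,t)\mapsto (x, t\cdot f(x)^{-k})$ — or rather, since on $\widetilde{X}_{f,\ell}$ we have $f = t^{-\ell}$, the map $(x,t)\mapsto(x,t^{1+k\ell})$ — and check that it lands in $\widetilde{X}_{f,\ell'}$: indeed $t^{\ell'} f = t^{\ell+km}(t^{-\ell}) = t^{km} = (t^\ell f)^k \cdot (\text{unit})$, so one must be slightly careful and instead use that $f^m = (t^{\ell}f)^{?}\cdots$; the cleanest route is to use the map on coordinate rings $A[t^{\pm 1}]/(t^{\ell}f-1)\to A[t^{\pm1}]/(t^{\ell'}f-1)$. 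Its inverse is built the same way from $-k$, giving that $\widetilde{X}_{f,\ell}\simeq\widetilde{X}_{f,\ell'}$ over $\mathbb{G}_m$ (even over $X$). For the case $\ell'\equiv-\ell\pmod m$, one composes with the automorphism $t\mapsto t^{-1}$ of $\mathbb{G}_m$: on $\widetilde{X}_{f,\ell}$ one has $t^{-\ell}=f$, and sending $t\mapsto t^{-1}$ interchanges the defining equations $t^{\ell}f=1$ and $t^{-\ell}f=1$ up to the substitution $f\mapsto f^{-1}$; combined with the semi-invariance $f(\mu(t_0,x)) = t_0^m f(x)$ one absorbs the discrepancy, and composing with the already-treated case handles general $\ell'\equiv\pm\ell\pmod m$. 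This compatibility with an automorphism of $\mathbb{G}_m$ is exactly the assertion.

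For part (2), the hypothesis $\gcd(\ell,m)=\gcd(\ell',m)=:d$ means that $\ell$ and $\ell'$ generate the same cyclic subgroup $d\mathbb{Z}_m\subseteq\mathbb{Z}_m$, so there is a unit $u\in\mathbb{Z}_m^*$ with $u\ell\equiv\ell'\pmod m$. By part (1) I may first replace $\ell'$ by any congruence-class representative, so it suffices to treat $\ell'=u\ell$ for such a unit $u$. The plan is to produce an isomorphism of the cylinders that acts on the extra $\mathbb{A}^1_*$-factor by a map related to $v\mapsto v^u$, which is \emph{not} an automorphism of $\mathbb{A}^1_*$ but becomes invertible after adjoining the factor. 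Precisely, $\widetilde X_{f,\ell}\times\mathbb{A}^1_*=\mathrm{Spec}(A[t^{\pm1},s^{\pm1}]/(t^\ell f-1))$, and I would look for an isomorphism sending $s$ to a monomial $s^a t^b$ and $t$ to $s^c t^e$ (monomial substitutions in the two invertible variables $s,t$) whose exponent matrix lies in $\mathrm{GL}_2(\mathbb{Z})$, chosen so that the equation $t^\ell f=1$ is carried to $t^{\ell'}f=1$. Using $f=t^{-\ell}$ on the hypersurface, the equation $t^\ell f=1$ is vacuous once we are on $\widetilde X_{f,\ell}$, so really the content is: $A[t^{\pm1},s^{\pm1}]/(t^\ell f -1)\simeq A_f[t^{\pm1}]$ canonically via $t\mapsto t$, $s\mapsto s$ with $f\mapsto t^{-\ell}$, hence $\widetilde X_{f,\ell}\times\mathbb{A}^1_*\simeq X_f\times\mathbb{A}^1_*$ compatibly, and then one must match $X_f\times\mathbb{A}^1_*$ for the two values — but since $X_f$ does not depend on $\ell$ at all, the desired isomorphism is essentially tautological once the first identification is set up carefully. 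I would present this as: $\widetilde X_{f,\ell}\simeq X_f\times_{\mathbb{G}_m}\mathbb{G}_m$ via the $\ell$-th power map $\mathbb{G}_m\to\mathbb{G}_m$ pulled back, so $\widetilde X_{f,\ell}\times\mathbb{A}^1_*$ is the fiber product of $X_f\to\mathbb{G}_m$ (the weight-$m$ semi-invariant $f$ being a morphism to... ) — the precise bookkeeping here is the delicate point.

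The main obstacle I anticipate is the bookkeeping with the weights and the $\mathbb{G}_m$-action when passing between $\widetilde X_{f,\ell}$, $X_f$, and the base changes: one has to keep straight whether $f$ is being used as a coordinate, as an element of weight $m$, or as the unit $t^{-\ell}$ on the hypersurface, and make sure the monomial substitutions genuinely define ring isomorphisms (invertibility of the exponent matrix over $\mathbb{Z}$) rather than just homomorphisms. In part (2) in particular, the subtlety is that the isomorphism of cylinders need not respect the projections to $\mathbb{G}_m$ — it only exists after taking the product with $\mathbb{A}^1_*$ — so I would be careful to phrase the argument so that the extra $\mathbb{A}^1_*$-factor is genuinely used to "absorb" the exponent $u\in\mathbb{Z}_m^*$ via an element of $\mathrm{GL}_2(\mathbb{Z})$ reducing to the right thing mod $m$. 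Once the correct substitution is written down, verifying it is an isomorphism should be a routine check.
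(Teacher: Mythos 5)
Your proposal has a genuine gap in both parts, and in both cases it is the same missing idea: you never twist the $X$-coordinate by the $\mathbb{G}_m$-action $\mu$, which is where the semi-invariance of $f$ actually enters. In part (1), the concrete map you write down, $(x,t)\mapsto(x,t^{1+k\ell})$ (equivalently $t\mapsto tf^{-k}$ with $x$ fixed), does not land in $\widetilde{X}_{f,\ell'}$ --- your own computation gives $t^{\ell'}f=t^{km}\neq1$ --- and no repair keeping $x$ fixed is possible: the projections $\widetilde{X}_{f,\ell}\to X_f$ and $\widetilde{X}_{f,\ell'}\to X_f$ are finite of degrees $\ell$ and $\ell'$, so your parenthetical claim that the isomorphism exists ``even over $X$'' is false whenever $\ell\neq\ell'$. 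The correct map (the paper's) is $\Phi(x,t)=(\mu(t^{k},x),t^{\pm1})$ with $\ell=\pm\ell'+km$: semi-invariance gives $t^{\pm\ell'}f(\mu(t^{k},x))=t^{\pm\ell'+km}f(x)=t^{\ell}f(x)=1$, so $\Phi$ is a well-defined isomorphism over the automorphism $t\mapsto t^{\pm1}$ of $\mathbb{G}_m$ (but not over $X$).

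In part (2) the gap is more serious. Your pivot to ``the equation $t^{\ell}f=1$ is vacuous on the hypersurface, so $\widetilde{X}_{f,\ell}\times\mathbb{A}^1_*\simeq X_f\times\mathbb{A}^1_*$ is essentially tautological'' is wrong: $\mathcal{O}(\widetilde{X}_{f,\ell})\simeq A_f[T]/(T^{\ell}-f^{-1})$ is a degree-$\ell$ \'etale extension of $A_f$, not $A_f$ itself, and the existence of an isomorphism after taking the product with $\mathbb{A}^1_*$ is precisely the nontrivial content of the statement --- it fails for a general (non-semi-invariant) $f$. Your other idea, a monomial substitution in $(t,s)$ alone with exponent matrix in $\mathrm{GL}_2(\mathbb{Z})$, also cannot carry $t^{\ell}f=1$ to $t^{\ell'}f=1$, because it produces an unwanted power of $s$ multiplying $f$ that only the action on $x$ can absorb. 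The paper's construction does exactly this: with $d=\gcd(\ell,m)=\gcd(\ell',m)$, choose $a,b$ with $\ell'=a\ell+bm$ and $a$ coprime to $m/d$, complete $(a,\,m/d)$ to a matrix in $\mathrm{SL}_2(\mathbb{Z})$ giving a torus automorphism $\sigma(t,u)=(t^{a}u^{m/d},t^{\alpha}u^{\beta})$, and combine it with the twist $x\mapsto\mu(t^{b}u^{-\ell/d},x)$; then $s^{\ell}f$ pulls back to $t^{a\ell+bm}u^{\ell m/d-\ell m/d}f=t^{\ell'}f$, so the automorphism of $X\times(\mathbb{A}^1_*)^2$ exchanges the two cylinders. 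Your reduction to $\ell'\equiv u\ell\pmod m$ with $u$ a unit is fine, but without the $\mu$-twist the key step is missing, and the one concrete shortcut you propose in its place is incorrect.
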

\begin{proof} Indeed, if  $\ell=\pm\ell'+km$ for some $k\in\Z$, then the morphism $\Phi:\widetilde{X}_{f,\ell}\rightarrow\widetilde{X}_{f,\ell'}$,  $(x,t)\mapsto(\mu(t^{k},x),t^{\pm1})$ is an isomorphism for which we have a commutative diagram  \begin{eqnarray*} \widetilde{X}_{f,\ell} & \stackrel{\Phi}{\xrightarrow{\hspace*{1cm}}} & \widetilde{X}_{f,\ell'}\\ \rho_{f,\ell}\bigg\downarrow &  & \bigg\downarrow\rho_{f,\ell'}\\ \mathbb{G}_{m} & \stackrel{t\mapsto t^{\pm1}}{\xrightarrow{\hspace*{1cm}}} & \mathbb{G}_{m}. \end{eqnarray*} For the second assertion, let us identify $\widetilde{X}_{f,\ell}\times\mathbb{A}_{*}^{1}$ and $\widetilde{X}_{f,\ell'}\times\mathbb{A}_{*}^{1}$ with the closed subvarieties of $X\times(\mathbb{A}_{*}^{1})^{2}=\mathrm{Spec}(A[t^{\pm1},u^{\pm1}])$ defined  by the equations $t^{\ell}f-1=0$ and $t^{\ell'}f-1=0$, respectively.  Let $d=\gcd(\ell,m)=\gcd(\ell',m)$. Then, there exist integers $a,b\in\Z$ satisfying $\ell'=a\ell+bm$ such that $a$ is coprime with $\frac{m}{d}$. This guarantees in turn the existence of a matrix  \[ A=\left(\begin{array}{cc} a & \frac{m}{d}\\ \alpha & \beta \end{array}\right)\in\mathrm{SL}_{2}(\mathbb{Z}) \] corresponding to an automorphism $\sigma(t,u)=(t^{a}u^{m/d},t^{\alpha}u^{\beta})$ of the torus $(\mathbb{A}_{*}^{1})^{2}=\mathrm{Spec}(\mathbb{C}[t^{\pm1},u^{\pm1}])$. Finally, a straightforward computation shows that the automorphism of $X\times(\mathbb{A}_{*}^{1})^{2}$ defined by $(x,t,u)\mapsto(\mu(t^{b}u^{-\ell/d},x),\sigma(t,u))$ maps $\widetilde{X}_{f,\ell'}\times\mathbb{A}_{*}^{1}$ isomorphically onto $\widetilde{X}_{f,\ell}\times\mathbb{A}_{*}^{1}$.\end{proof} 
\begin{rem} Note that the isomorphism constructed in the proof of  Proposition~\ref{prop:isos-explicites} does not preserve the induced isotrivial fibration $\rho_{f,\ell}\circ\mathrm{pr}_{1}:\widetilde{X}_{f,\ell}\times\mathbb{A}_{*}^{1}\rightarrow \mathbb{G}_m$ with fiber $F\times\mathbb{A}_{*}^{1}$. 
\end{rem}
In particular, observe that Proposition \ref{prop:isos-explicites} implies that if $f$ is a semi-invariant regular function of weight $m\neq0$, then the varieties $\widetilde{X}_{f,\ell}\times\mathbb{A}_{*}^{1}$ are isomorphic to  $\widetilde{X}_{f,1}\times\mathbb{A}_{*}^{1}=X_f\times\mathbb{A}_{*}^{1}$ for all integers $\ell\geq1$ relatively prime with $m$. Let us list some  consequences of this observation. 

\begin{cor} \label{cor:nonruled} Suppose that the variety $X_{f}$ is not $\mathbb{A}_{*}^{1}$-uniruled (for instance, that $X_{f}$ is smooth of log-general type). Then for all $\ell\in\mathbb{Z}_{\geq1}$ relatively prime with $m$ the varieties $\widetilde{X}_{f,\ell}$ are isomorphic. 
\end{cor}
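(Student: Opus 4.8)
The plan is to deduce the corollary directly from Proposition~\ref{prop:isos-explicites}(2) combined with a cancellation principle for varieties that are not $\mathbb{A}^1_*$-uniruled. Fix $\ell\geq 1$ with $\gcd(\ell,m)=1$. Since $\gcd(\ell,m)=1=\gcd(1,m)$, part (2) of the Proposition gives an isomorphism of abstract varieties
\[
\widetilde{X}_{f,\ell}\times\mathbb{A}^1_*\;\simeq\;\widetilde{X}_{f,1}\times\mathbb{A}^1_*\;=\;X_f\times\mathbb{A}^1_*.
\]
So the whole content of the corollary is the implication: if $Z\times\mathbb{A}^1_*\simeq X_f\times\mathbb{A}^1_*$ and $X_f$ is not $\mathbb{A}^1_*$-uniruled, then $Z\simeq X_f$ (and more generally all the $\widetilde{X}_{f,\ell}$, being pairwise $\mathbb{A}^1_*$-cylinder-isomorphic for $\ell$ coprime to $m$, are pairwise isomorphic).

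First I would recall the relevant notion: a variety $V$ of dimension $d$ is $\mathbb{A}^1_*$-uniruled if there is a variety $W$ of dimension $d-1$ and a dominant rational map $W\times\mathbb{A}^1_*\dashrightarrow V$ restricting to an open immersion on a general $\mathbb{A}^1_*$; equivalently, a general point of $V$ lies on an image of $\mathbb{A}^1_*$ in a suitable family. The key structural fact is that being $\mathbb{A}^1_*$-uniruled is a birational invariant, and that a product $V\times\mathbb{A}^1_*$ is \emph{always} $\mathbb{A}^1_*$-uniruled (use the rulings $\{v\}\times\mathbb{A}^1_*$). Hence, if $X_f$ is not $\mathbb{A}^1_*$-uniruled while $X_f\times\mathbb{A}^1_*$ is, the ``extra'' $\mathbb{A}^1_*$-ruling on $X_f\times\mathbb{A}^1_*\simeq\widetilde{X}_{f,\ell}\times\mathbb{A}^1_*$ coming from the second factor must, after the isomorphism, be related to the rulings $\{z\}\times\mathbb{A}^1_*$ of $\widetilde{X}_{f,\ell}\times\mathbb{A}^1_*$. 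More precisely, I would argue that the maximal rationally $\mathbb{A}^1_*$-connected fibration (the ``$\mathbb{A}^1_*$-MRC quotient'') of $\widetilde{X}_{f,\ell}\times\mathbb{A}^1_*$ is, birationally, the projection to $\widetilde{X}_{f,\ell}$, and likewise for $X_f\times\mathbb{A}^1_*$ the projection to $X_f$; since both products are isomorphic, their $\mathbb{A}^1_*$-MRC quotients are birational, giving $\widetilde{X}_{f,\ell}\sim_{\mathrm{bir}}X_f$. One then upgrades birational to biregular: $\widetilde{X}_{f,\ell}$ and $X_f$ are affine, and the standard argument (already used by Iitaka--Fujita and in \cite{Dubouloz-Tori}) shows that an isomorphism $\widetilde{X}_{f,\ell}\times\mathbb{A}^1_*\simeq X_f\times\mathbb{A}^1_*$ descends to an isomorphism of the base factors once these are not $\mathbb{A}^1_*$-uniruled, because the descriptions of the rings of invariant-type functions (those constant along the $\mathbb{A}^1_*$-rulings) on both sides must coincide.

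Concretely, the cleanest route is: the subalgebra of $\mathcal{O}(X_f\times\mathbb{A}^1_*)$ consisting of functions that extend to regular functions on a general complete model along the $\mathbb{A}^1_*$-ruling recovers $\mathcal{O}(X_f)$ up to units, because $X_f$ has no $\mathbb{A}^1_*$-ruling of its own to interfere; transporting this intrinsic description through the isomorphism recovers $\mathcal{O}(\widetilde{X}_{f,\ell})$, whence $\widetilde{X}_{f,\ell}\simeq X_f$. Doing this for every $\ell$ coprime to $m$ yields that all $\widetilde{X}_{f,\ell}$ are isomorphic to $X_f$, hence to each other. Finally, for the parenthetical instance, I note that a smooth affine variety of log-general type is in particular not $\mathbb{A}^1_*$-uniruled: an $\mathbb{A}^1_*$-ruling would produce, on a general fiber, a dominant map from $\mathbb{A}^1_*$, which has logarithmic Kodaira dimension $0$, forcing the log-Kodaira dimension of $X_f$ to be at most $\dim X_f-1<\dim X_f$, contradicting log-general type (one invokes the logarithmic easy addition / subadditivity for the ruling fibration).

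The main obstacle is the step ``birational implies biregular'' — i.e.\ showing that the isomorphism of $\mathbb{A}^1_*$-cylinders actually descends to the non-uniruled factors, rather than merely giving a birational equivalence. The heart of it is to characterize $\mathcal{O}(X_f)\subset\mathcal{O}(X_f\times\mathbb{A}^1_*)$ (resp.\ $\mathcal{O}(\widetilde{X}_{f,\ell})$) intrinsically, in a way visibly preserved by \emph{any} isomorphism of the cylinders; this is exactly where the hypothesis that $X_f$ carries no $\mathbb{A}^1_*$-ruling is used, since it forces the $\mathbb{A}^1_*$-rulings on the cylinder to be unique up to the obvious ones and pins down the base. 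This is the argument type developed in \cite{Dubouloz-Tori} (after \cite{IF}), and I would either cite it directly or reproduce its short proof in the present setting.
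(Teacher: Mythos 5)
Your proposal is correct and follows essentially the same route as the paper: Proposition~\ref{prop:isos-explicites}(2) gives the isomorphisms of $\mathbb{A}^1_*$-cylinders, and the conclusion then rests on the Iitaka--Fujita-type strong cancellation theorem for varieties that are not $\mathbb{A}^1_*$-uniruled, which the paper simply cites from \cite{IF} and \cite{Dubouloz-Tori} while you sketch (somewhat heuristically) why the descent holds before falling back on the same citation. The extra discussion of rulings and descent is harmless but not a different argument.
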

\begin{proof} Indeed, by a strong cancellation theorem due to Iitaka and Fujita \cite{IF}  (see also \cite{Dubouloz-Tori}), every isomorphism between $\widetilde{X}_{f,\ell}\times\mathbb{A}_{*}^{1}$ and $\widetilde{X}_{f,\ell'}\times\mathbb{A}_{*}^{1}$ descends to an isomorphism between $\widetilde{X}_{f,\ell}$ and $\widetilde{X}_{f,\ell'}$. \end{proof}

\begin{cor} \label{cor:Kod1} Suppose that $X_{f}$ is a smooth factorial affine surface of logarithmic Kodaira dimension $\kappa(X_f)$ different from $1$. Then the surfaces $\widetilde{X}_{f,\ell}$ are isomorphic for all $\ell\geq1$ relatively prime with $m$ . 
\end{cor}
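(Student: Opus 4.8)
The plan is to reduce the statement to Corollary~\ref{cor:nonruled} together with the already-cited results on cancellation for factorial surfaces. By Proposition~\ref{prop:isos-explicites}(2), for any $\ell,\ell'\geq1$ relatively prime with $m$ we have an isomorphism of abstract varieties $\widetilde{X}_{f,\ell}\times\mathbb{A}^1_*\simeq\widetilde{X}_{f,\ell'}\times\mathbb{A}^1_*$, and by the remark following the observation after that proposition this is the same as saying $\widetilde{X}_{f,\ell}\times\mathbb{A}^1_*\simeq X_f\times\mathbb{A}^1_*$. Moreover each $\widetilde{X}_{f,\ell}\to X_f$ is a finite \'etale cover, so since $X_f$ is smooth and factorial of dimension $2$, each $\widetilde{X}_{f,\ell}$ is a smooth affine surface; one checks it is again factorial (an \'etale $\mathbb{Z}_\ell$-cover of a factorial surface whose class group is trivial—indeed the cyclic cover trivializing a line bundle that is already trivial stays factorial, and more directly $\widetilde{X}_{f,\ell}$ is a principal open subset argument via $t^\ell f=1$), and it has the same logarithmic Kodaira dimension as $X_f$ because $\kappa$ is invariant under \'etale covers in dimension $2$ (in fact it does not increase, and does not decrease either under finite \'etale maps). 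So $\kappa(\widetilde{X}_{f,\ell})=\kappa(X_f)\neq1$ as well.

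Now split into cases according to $\kappa(X_f)$. If $\kappa(X_f)=2$, i.e.\ $X_f$ is of log-general type, then $X_f$ is not $\mathbb{A}^1_*$-uniruled and Corollary~\ref{cor:nonruled} applies verbatim, giving $\widetilde{X}_{f,\ell}\simeq\widetilde{X}_{f,\ell'}$. If $\kappa(X_f)\in\{-\infty,0\}$, then a factorial affine surface with these Kodaira dimensions cannot contain an $\mathbb{A}^1_*$-fibration in a way that would produce extra isomorphisms of the cylinders not descending to the base: here I would invoke \cite[Proposition 12]{Dubouloz-Tori}, which states precisely that a smooth factorial affine surface $S$ with $\kappa(S)\neq1$ has the Laurent Cancellation property. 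Concretely, apply that result with $S=\widetilde{X}_{f,\ell}$ (resp.\ $\widetilde{X}_{f,\ell'}$): from $\widetilde{X}_{f,\ell}\times\mathbb{A}^1_*\simeq\widetilde{X}_{f,\ell'}\times\mathbb{A}^1_*$ and $\kappa(\widetilde{X}_{f,\ell})=\kappa(X_f)\neq1$, Laurent Cancellation yields $\widetilde{X}_{f,\ell}\simeq\widetilde{X}_{f,\ell'}$ directly. This single invocation in fact covers all cases $\kappa\neq1$ at once, so the case distinction is unnecessary; Corollary~\ref{cor:nonruled} is the instance $\kappa=2$ streamlined through Iitaka--Fujita, but \cite[Proposition 12]{Dubouloz-Tori} gives the uniform argument.

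The only genuinely substantive point to verify is that the hypothesis ``$X_f$ smooth factorial of dimension $2$ with $\kappa\neq1$'' transfers to each $\widetilde{X}_{f,\ell}$, so that \cite[Proposition 12]{Dubouloz-Tori} is legitimately applicable to the cylinder isomorphism. Smoothness and dimension are immediate since $\mathrm{pr}_X\colon\widetilde{X}_{f,\ell}\to X_f$ is finite \'etale; invariance of logarithmic Kodaira dimension under a finite \'etale cover of smooth surfaces is standard (one compactifies equivariantly by a log-resolution and uses that $\kappa$ of log pairs is insensitive to finite \'etale pullback). Factoriality is the point to be a touch careful with: since $X_f$ is factorial its Picard group is trivial, the $\mathbb{Z}_\ell$-cover $\widetilde{X}_{f,\ell}\to X_f$ corresponds to the trivial $\ell$-torsion line bundle via the class of $f$ being $\ell$-divisible trivially, and a cyclic \'etale cover associated to a torsion line bundle that is itself trivial does not create new divisor classes; alternatively, one reads factoriality off the explicit equation $t^\ell f=1$ on $X_f\times\mathbb{A}^1_*$ by the same localization computation that shows $X_f$ is factorial. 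Granting this, the corollary follows; I expect no further obstacle.
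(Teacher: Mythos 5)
Your overall strategy coincides with the paper's: obtain $\widetilde{X}_{f,\ell}\times\mathbb{A}^1_*\simeq X_f\times\mathbb{A}^1_*$ from Proposition \ref{prop:isos-explicites}, check that each $\widetilde{X}_{f,\ell}$ is again a smooth factorial affine surface with $\kappa\neq1$, and conclude by \cite[Proposition 12]{Dubouloz-Tori} (the detour through Corollary \ref{cor:nonruled} for $\kappa=2$ is harmless but, as you note, unnecessary). The gap is in the step you yourself flag as delicate: factoriality of $\widetilde{X}_{f,\ell}$. You justify it by the principle that an \'etale cyclic cover of a factorial variety ``does not create new divisor classes''; this is not a valid principle, and notice that your argument nowhere uses the hypothesis $\gcd(\ell,m)=1$, whereas factoriality of $\widetilde{X}_{f,\ell}$ genuinely depends on it: the cover $\widetilde{X}_{f,\ell}\to X_f$ is \'etale Galois for \emph{every} $\ell$, yet an unramified Kummer cover $\mathrm{Spec}\bigl(\mathcal{O}(X_f)[t]/(t^{\ell}f-1)\bigr)$ of a factorial base can acquire a nontrivial divisor class group. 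Compare Lemma \ref{lemma:factorial}, where the key point making $(x_n)$ prime is precisely the irreducibility of $t^{\ell}y^{m}+1$, which fails when $\gcd(\ell,m)>1$. Your fallback of ``reading factoriality off the explicit equation'' is also unavailable here, since in this corollary $X$ is an arbitrary affine $\mathbb{G}_m$-variety and factoriality of $X_f$ is only a hypothesis, not the output of a localization computation.

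The repair is short and is exactly the paper's one-line deduction: transfer all three properties through the \emph{cylinder isomorphism} rather than through the \'etale cover. Since $\widetilde{X}_{f,\ell}\times\mathbb{A}^1_*\simeq X_f\times\mathbb{A}^1_*$ (this is where coprimality enters), and since for a normal affine variety $Z$ the product $Z\times\mathbb{A}^1_*$ is smooth if and only if $Z$ is, $\mathrm{Cl}(Z\times\mathbb{A}^1_*)\simeq\mathrm{Cl}(Z)$, and $\kappa(Z\times\mathbb{A}^1_*)=\kappa(Z)$ by additivity of the logarithmic Kodaira dimension, it follows that $\widetilde{X}_{f,\ell}$ is smooth, factorial and of the same Kodaira dimension as $X_f$; then \cite[Proposition 12]{Dubouloz-Tori} applies exactly as you say. (Your \'etale-cover argument is fine for smoothness, dimension and $\kappa$; it is only the factoriality transfer that must go through the cylinder.)
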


\begin{proof} Since  $\widetilde{X}_{f,\ell}\times\mathbb{A}_{*}^{1}$ is isomorphic to $X_{f}\times\mathbb{A}_{*}^{1}$ for all $\ell\in\mathbb{Z}_{\geq1}$ relatively prime with $m$, we deduce that $\widetilde{X}_{f,\ell}$ is smooth, factorial, of the same Kodaira dimension as $X_{f}$. So the assertion follows from Proposition 12 in \cite{Dubouloz-Tori}  which asserts that $\mathbb{A}_{*}^{1}$-cancellation holds for smooth factorial affine surfaces of logarithmic Kodaira dimension different from $1$.  \end{proof}

\subsection{Semi-invariant hypersurfaces of affine spaces}
In this subsection, we consider more specifically the case of principal open subsets $X_f\subset\mathbb{A}^{n}$  associated to semi-invariant polynomials $f\in\C[x_1,\ldots,x_n]$. As a third application of Proposition~\ref{prop:isos-explicites}, we determine  the groups $\mathcal{O}(\widetilde{X}_{f,\ell})^{\ast}$ of invertible regular functions on the varieties  $\widetilde{X}_{f,\ell}$ when $f$ is irreducible and $\ell$ is coprime with $m$. 

\begin{lemma}\label{lemma:units} Let $n,m,\ell\geq1$ be  integers,  let $f\in\C[x_1,\ldots,x_n]$ be semi-invariant of weight $m\geq1$ under an effective regular action of the multiplicative group $\mathbb{G}_{m}$ on $\C[x_1,\ldots,x_n]$ and let $\widetilde{X}_{f,\ell}\subset\mathbb{A}^{n}\times\mathbb{A}_{*}^{1}=\mathrm{Spec}(\C[x_1,\ldots,x_n][t^{\pm1}])$ be the hypersurface defined by the equation $t^{\ell}f=1$. If $f$ is irreducible and  $\ell$ and $m$ are coprime, then 
 $$\mathcal{O}(\widetilde{X}_{f,\ell})^{\ast}=\{\lambda t^i\mid \lambda\in\C^{\ast}, i\in\Z\}.$$  
\end{lemma}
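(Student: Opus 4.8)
The plan is to compute the units of $\widetilde{X}_{f,\ell}$ by reducing to the well-understood case $\ell=1$ via Proposition~\ref{prop:isos-explicites}(2), and then to describe the units of $X_f$ directly using the irreducibility of $f$. Since $\gcd(\ell,m)=1=\gcd(1,m)$, Proposition~\ref{prop:isos-explicites}(2) gives an isomorphism of abstract varieties $\widetilde{X}_{f,\ell}\times\mathbb{A}^1_*\simeq\widetilde{X}_{f,1}\times\mathbb{A}^1_*=X_f\times\mathbb{A}^1_*$. Now for any affine variety $Z$ one has $\mathcal{O}(Z\times\mathbb{A}^1_*)^*=\mathcal{O}(Z)^*\times\{u^j\mid j\in\Z\}$ provided $\mathcal{O}(Z)$ has no nontrivial idempotents, i.e.\ $Z$ is connected; more precisely, a unit on $Z\times\mathbb{A}^1_*$ written as a Laurent polynomial $\sum_j g_j(z)u^j$ has each $g_j$ forced to be a unit-or-zero on each connected component, and connectedness forces the whole thing into the stated form. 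So I would first record the statement $\mathcal{O}(\widetilde{X}_{f,\ell})^*\times\{t'^{\,j}\}\simeq\mathcal{O}(X_f)^*\times\{u^j\}$ as groups (noting both $\widetilde{X}_{f,\ell}$ and $X_f$ are irreducible, hence connected), which reduces the problem to computing $\mathcal{O}(X_f)^*$ and then counting ranks.

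Next I would compute $\mathcal{O}(X_f)^*$. Since $X_f$ is the principal open subset of $\mathbb{A}^n$ where $f\neq0$, its coordinate ring is $\C[x_1,\dots,x_n][1/f]$. Because $\C[x_1,\dots,x_n]$ is a UFD and $f$ is irreducible, the units of this localization are exactly $\{\lambda f^k\mid \lambda\in\C^*,\ k\in\Z\}$: indeed any unit $g/f^N$ with $g\in\C[x_1,\dots,x_n]$ has $g$ dividing a power of $f$ in the polynomial ring, hence (by unique factorization and irreducibility of $f$) $g=\mu f^s$ for some $\mu\in\C^*$, $s\ge0$. Thus $\mathcal{O}(X_f)^*/\C^*\simeq\Z$, generated by the class of $f$.

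Combining, the group $\mathcal{O}(\widetilde{X}_{f,\ell})^*/\C^*$ sits in the isomorphism $\bigl(\mathcal{O}(\widetilde{X}_{f,\ell})^*/\C^*\bigr)\oplus\Z \simeq \bigl(\mathcal{O}(X_f)^*/\C^*\bigr)\oplus\Z \simeq \Z^2$, so $\mathcal{O}(\widetilde{X}_{f,\ell})^*/\C^*$ is free of rank $1$; and it visibly contains the class of $t$ (a unit on $\widetilde{X}_{f,\ell}$ since $t^\ell f=1$). To finish I must check that the class of $t$ is not a nontrivial power in $\mathcal{O}(\widetilde{X}_{f,\ell})^*/\C^*$, i.e.\ that it generates. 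Here I would use the $\Z_\ell$-Galois cover $\mathrm{pr}_X\colon\widetilde{X}_{f,\ell}\to X_f$: its pullback on units sends the generator $f$ of $\mathcal{O}(X_f)^*/\C^*$ to $f=t^{-\ell}$ on $\widetilde{X}_{f,\ell}$, so $t^{-\ell}$ (equivalently $t^\ell$) lies in the image of an index-divisible-by-$\ell$ subgroup; pulling back further to the trivialized cover over $C=\mathrm{Spec}(\C[v^{\pm1}])$ (where $t=v^{-m}$, $f=v^{m\ell}$ in the coordinates of the isomorphism $F\times C\simeq\widetilde{X}_{f,\ell}\times_{\mathbb{G}_m}C$) together with $\gcd(\ell,m)=1$ pins down the generator. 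The main obstacle is exactly this last step: getting the generator of the rank-one group on the nose rather than merely up to finite index, and organizing the bookkeeping between the three varieties $\widetilde{X}_{f,\ell}$, $X_f$, and the trivializing cover so that the coprimality $\gcd(\ell,m)=1$ is used in the right place. An alternative, possibly cleaner, route for this final step is to argue directly inside $\C[x_1,\dots,x_n][t^{\pm1}]/(t^\ell f-1)$ that any unit is a Laurent polynomial in $t$ times a constant, by using the grading: assign $x_i$ its $\mathbb{G}_m$-weight and $t$ the weight $-m/\ell$... but since weights must be integral I would instead lift to the $\ell$-fold cover, exploit that there $f$ becomes an $\ell$-th power-free expression, and deduce the claim.
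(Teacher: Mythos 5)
Your reduction is exactly the paper's: compute $\mathcal{O}(\widetilde{X}_{f,1})^{\ast}=\{\lambda t^{i}\}$ from irreducibility of $f$ (equivalently $\mathcal{O}(X_{f})^{\ast}=\{\lambda f^{k}\}$ by the UFD argument), invoke Proposition \ref{prop:isos-explicites}(2) to identify the two cylinders, use the unit formula for a cylinder, and conclude that $\mathcal{O}(\widetilde{X}_{f,\ell})^{\ast}/\C^{\ast}$ is free of rank one and contains the class of $t$; the coprimality of $\ell$ and $m$ indeed enters only there. The step you flag as the main obstacle --- that the class of $t$ actually \emph{generates} this rank-one group rather than being a proper power of a generator --- is precisely the point the paper dispatches with ``and the lemma follows'', so you are not missing an ingredient the paper supplies; but your two sketches for it are left hanging, and it does deserve an argument. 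It closes in one line using the $\mathbb{Z}_{\ell}$-Galois cover $\mathrm{pr}_{X}:\widetilde{X}_{f,\ell}\rightarrow X_{f}$ you already mention, via the norm rather than the pullback: writing $\tau(x,t)=(x,\zeta t)$ with $\zeta$ a primitive $\ell$-th root of unity, the norm $N(g)=\prod_{i=0}^{\ell-1}(\tau^{i})^{\ast}g$ of any unit $g$ is an invariant unit, hence equals $\mu f^{s}=\mu t^{-\ell s}$ for some $\mu\in\C^{\ast}$, $s\in\Z$. If $t=\lambda g_{0}^{k}$ for a generator $g_{0}$, comparing $N(t)=\zeta^{\ell(\ell-1)/2}t^{\ell}$ with $N(\lambda g_{0}^{k})=\lambda^{\ell}\mu^{k}t^{-\ell sk}$, and using that $t$ has infinite order modulo constants (since $t^{-\ell}=f$ is nonconstant), gives $sk=-1$, hence $k=\pm1$ and $t$ generates. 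Alternatively one can bypass the rank count by tracking units through the explicit isomorphism of Proposition \ref{prop:isos-explicites}, which is monomial in $(t,u)$: every unit of $\widetilde{X}_{f,\ell}\times\mathbb{A}_{*}^{1}$ is then of the form $\lambda t^{i}u^{j}$, and a unit pulled back from $\widetilde{X}_{f,\ell}$ must have $j=0$. One last small point if you write this up: the cylinder unit formula requires $\widetilde{X}_{f,\ell}$ connected, which holds because $t^{\ell}-f^{-1}$ is irreducible over $\C(x_{1},\ldots,x_{n})$ ($f$ being irreducible is not a proper power).
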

\begin{proof} First  note that since the polynomial $f\in\C[x_1,\ldots,x_n]$ is irreducible, we have $$\mathcal{O}(\widetilde{X}_{f,1})^{\ast}=\left(\C[x_1,\ldots,x_n]/(tf-1)\right)^{\times}=\{\lambda t^i\mid \lambda\in\C^{\ast}, i\in\Z\}\simeq\mathbb{C}^*\cdot\mathbb{Z}.$$  By Proposition \ref{prop:isos-explicites}, if $\ell$ is coprime with $m$, then the varieties $\widetilde{X}_{f,\ell}\times\mathbb{A}^1_{\ast}$ and $\widetilde{X}_{f,1}\times\mathbb{A}^1_{\ast}$ are isomorphic and have thus isomorphic unit groups. On the other hand, denoting $\mathcal{O}(X\times\mathbb{A}^1_{\ast})=\mathcal{O}(X)[u, u^{-1}]$, we have that $\mathcal{O}(X\times\mathbb{A}^1_{\ast})^{\ast}=\{a\cdot u^i\mid a\in\mathcal{O}(X)^{\ast}, i\in\Z\}\simeq\mathcal{O}(X)^{\ast}\cdot\Z$ for every affine algebraic variety $X$. Therefore, we get that $\mathcal{O}(\widetilde{X}_{f,\ell}\times\mathbb{A}^1_{\ast})^{\ast}/\C^{\ast}\simeq\mathcal{O}(\widetilde{X}_{f,1}\times\mathbb{A}^1_{\ast})^{\ast}/\C^{\ast}\simeq\mathbb{Z}^2$ if $\ell$ is coprime with $m$, and the lemma follows.
\end{proof}

 \begin{rem}We believe that one can drop the hypothesis $\gcd(m,\ell)=1$ in the previous lemma and that the equality $\mathcal{O}(\widetilde{X}_{f,\ell})^{\ast}=\{\lambda t^i\mid \lambda\in\C^{\ast}, i\in\Z\}$ holds for all $\ell\geq1$. Nevertheless,  this seems to be a difficult question (see e.g.~Conjecture 2.9 in \cite{Ford}).
\end{rem} 

 The previous lemma turns out to be a useful tool to decide when a variety $\widetilde{X}_{f,\ell}$ is nonisomorphic to $\widetilde{X}_{f,1}$, since it allows us to reduce the problem to the study of the generic fibers of the projections  $\rho_{f,\ell}:\widetilde{X}_{f,\ell}\to\mathbb{A}_{*}^{1}$ for all $\ell$ coprime with $m$. Namely, we have the following result:

\begin{prop}\label{lemma:fibre-generique}   Let $f\in\C[x_1,\ldots,x_n]$ be irreducible and semi-invariant of weight $m\geq1$ under an effective regular action $\mu$ of the multiplicative group $\mathbb{G}_{m}$ on $\C[x_1,\ldots,x_n]$. Let $\kk=\C(t)$ and denote, for every $\ell\geq1$, by $Y_{\ell}$ the closed subvariety of $\mathbb{A}_{\kk}^{n}$ defined by the equation $f=t^{-\ell}$. Suppose that the varieties $\widetilde{X}_{f,\ell}$ and $\widetilde{X}_{f,\ell'}$ are isomorphic for some integers $\ell, \ell'$ both coprime with $m$. Let further $\ell''\geq1$ be congruent to $-\ell'$ modulo $m$. Then,  $Y_{\ell}$ is isomorphic to $Y_{\ell'}$ or to $Y_{\ell''}$.  
\end{prop}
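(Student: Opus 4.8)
The plan is to realise $Y_{\ell}$ as the generic fiber of the isotrivial fibration $\rho_{f,\ell}\colon\widetilde{X}_{f,\ell}\to\mathbb{G}_m$, to show that any isomorphism $\widetilde{X}_{f,\ell}\to\widetilde{X}_{f,\ell'}$ is automatically compatible with these fibrations up to an automorphism of the base $\mathbb{G}_m$, and finally to pass to generic fibers and absorb the resulting discrepancy by means of the $\mathbb{G}_m$-action, now performed over the function field $\kk=\C(t)$.

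First I would record that, writing $\mathcal{O}(\widetilde{X}_{f,\ell})=\C[x_1,\ldots,x_n][t^{\pm1}]/(t^{\ell}f-1)$ and tensoring with $\kk$ over $\C[t^{\pm1}]$, the generic fiber of $\rho_{f,\ell}=\mathrm{pr}_{\mathbb{G}_m}|_{\widetilde{X}_{f,\ell}}$ is $\mathrm{Spec}\big(\kk[x_1,\ldots,x_n]/(f-t^{-\ell})\big)=Y_{\ell}$, and likewise for $\ell'$. Now let $\Psi\colon\widetilde{X}_{f,\ell}\to\widetilde{X}_{f,\ell'}$ be an isomorphism. Since $f$ is irreducible and $\gcd(\ell,m)=\gcd(\ell',m)=1$, Lemma~\ref{lemma:units} gives $\mathcal{O}(\widetilde{X}_{f,\ell})^{*}=\C^{*}\cdot t^{\Z}$ and $\mathcal{O}(\widetilde{X}_{f,\ell'})^{*}=\C^{*}\cdot t^{\Z}$. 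As $\Psi^{*}$ restricts to an isomorphism of unit groups fixing $\C^{*}$ pointwise, it must carry the generator $t$ of $\mathcal{O}(\widetilde{X}_{f,\ell'})^{*}/\C^{*}\cong\Z$ to a generator of $\mathcal{O}(\widetilde{X}_{f,\ell})^{*}/\C^{*}$, that is, $\Psi^{*}(t)=\lambda t^{\varepsilon}$ for some $\lambda\in\C^{*}$ and $\varepsilon\in\{1,-1\}$. In particular $\Psi^{*}$ maps the subalgebra $\C[t^{\pm1}]\subset\mathcal{O}(\widetilde{X}_{f,\ell'})$ isomorphically onto $\C[t^{\pm1}]\subset\mathcal{O}(\widetilde{X}_{f,\ell})$, so that $\Psi$ descends to the automorphism $\bar{\Psi}\colon\mathbb{G}_m\to\mathbb{G}_m$, $t\mapsto\lambda t^{\varepsilon}$, with $\rho_{f,\ell'}\circ\Psi=\bar{\Psi}\circ\rho_{f,\ell}$.

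Taking generic fibers in this commutative square, $\Psi$ then induces an isomorphism of $\kk$-schemes between $Y_{\ell}$ and the base change of $Y_{\ell'}$ along $\bar{\Psi}^{*}\colon\kk\to\kk$, $t\mapsto\lambda t^{\varepsilon}$; explicitly, $Y_{\ell}\cong\mathrm{Spec}\big(\kk[x_1,\ldots,x_n]/(f-\lambda^{-\ell'}t^{-\varepsilon\ell'})\big)$. It remains to identify this last hypersurface of $\mathbb{A}^{n}_{\kk}$ with $Y_{\ell'}$ in case $\varepsilon=1$ and with $Y_{\ell''}$ in case $\varepsilon=-1$, and for this I would use the $\mathbb{G}_m$-action over $\kk$: for $g\in\kk^{*}$ the associated $\kk$-point $\mu_g$ of $\mathbb{G}_m$ is an automorphism of $\mathbb{A}^{n}_{\kk}$ with $\mu_g^{*}(f)=g^{m}f$, and thus restricts to an isomorphism from the hypersurface $\{f=g^{-m}c\}$ onto $\{f=c\}$ for every $c\in\kk$. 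Since $\C$ is algebraically closed, pick $c_{0}\in\C^{*}$ with $c_{0}^{m}=\lambda$. If $\varepsilon=1$, then $g=c_{0}^{\ell'}\in\C^{*}$ satisfies $g^{-m}=\lambda^{-\ell'}$, so $Y_{\ell}\cong\{f=t^{-\ell'}\}=Y_{\ell'}$. If $\varepsilon=-1$, write $\ell'+\ell''=mk$ with $k\geq1$ (legitimate because $\ell''\equiv-\ell'\pmod m$ and $\ell',\ell''\geq1$) and take $g=c_{0}^{\ell'}t^{-k}\in\kk^{*}$; then $g^{-m}t^{-\ell''}=\lambda^{-\ell'}t^{\ell'+\ell''}t^{-\ell''}=\lambda^{-\ell'}t^{\ell'}$, so $Y_{\ell}\cong\{f=t^{-\ell''}\}=Y_{\ell''}$. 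This would finish the proof.

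I expect the sole genuine obstacle to be the second step, namely promoting a bare isomorphism $\widetilde{X}_{f,\ell}\cong\widetilde{X}_{f,\ell'}$ to one compatible with the projections onto $\mathbb{G}_m$; this is exactly the content of the unit group computation in Lemma~\ref{lemma:units}, and it is the reason the coprimality hypotheses $\gcd(\ell,m)=\gcd(\ell',m)=1$ cannot be dropped here. Once that is granted, the remainder is routine bookkeeping with the $\mathbb{G}_m$-action over $\kk$, made to work by the fact that over an algebraically closed field every nonzero constant, as well as every monomial $t^{mk}$, is an $m$-th power.
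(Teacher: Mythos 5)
Your proof is correct and takes essentially the same route as the paper: Lemma~\ref{lemma:units} forces any isomorphism to descend to an automorphism $t\mapsto\lambda t^{\pm1}$ of the base, and one then compares the generic fibers of the projections $\rho_{f,\ell}$, $\rho_{f,\ell'}$. The only (harmless) difference is that the paper normalizes the constant $\lambda$ and the inversion $t\mapsto t^{-1}$ upstairs, by composing with explicit automorphisms of $\widetilde{X}_{f,\ell}$ and the isomorphism $\widetilde{X}_{f,\ell'}\simeq\widetilde{X}_{f,\ell''}$ before taking generic fibers, whereas you absorb them after base change, using the $\mathbb{G}_{m}$-action over $\kk=\C(t)$ and the existence of $m$-th roots of constants in $\C^{*}$.
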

\begin{proof}  By Lemma \ref{lemma:units}, the groups $\mathcal{O}(\widetilde{X}_{f,\ell})^{\ast}/\C^{\ast}$ and $\mathcal{O}(\widetilde{X}_{f,\ell'})^{\ast}/\C^{\ast}$ are both isomorphic to $\Z$, generated by the image of $t$. So every isomorphism $\Phi:\widetilde{X}_{f,\ell}\to \widetilde{X}_{f,\ell'}$ induces an automorphism $\varphi$ of $\mathbb{A}^1_*=\textrm{Spec}(\C[t^{\pm1}])$ of the form  $t\mapsto at^{\pm1}$ for some $a\in\C^*$. Composing $\Phi$ with the automorphism of $\widetilde{X}_{f,\ell}$ defined by  $(x,t)\mapsto(\mu(a^{\ell/m},x),a^{-1}t)$, where $a^{\ell/m}$ denotes a $m$-th root of $a^{\ell}$, we get an isomorphism $\Phi_2:\widetilde{X}_{f,\ell}\to \widetilde{X}_{f,\ell'}$ which fits into a commutative diagram  \begin{eqnarray*} \widetilde{X}_{f,\ell} & \stackrel{\Phi_2}{\xrightarrow{\hspace*{1cm}}} & \widetilde{X}_{f,\ell'}\\ \rho_{f,\ell}\bigg\downarrow &  & \bigg\downarrow\rho_{f,\ell'}\\ \textrm{Spec}(\C[t^{\pm1}]) & \stackrel{\varphi_2:t\mapsto t^{\pm1}}{\xrightarrow{\hspace*{1cm}}} & \textrm{Spec}(\C[t^{\pm1}]). \end{eqnarray*}  
If $\varphi_2(t)=t$, then we get an isomorphism between the generic fibers of $\rho_{f,\ell}$ and $\rho_{f,\ell'}$, which are isomorphic to $Y_{\ell}$ and $Y_{\ell'}$, respectively. If $\varphi_2(t)=t^{-1}$, then composing $\Phi_2$ further with the isomorphism between $\widetilde{X}_{f,\ell'}$ and $\widetilde{X}_{f,\ell''}$ defined by  $(x,t)\mapsto(\mu(t^q,x),t^{-1})$, where $q\geq1$ satisfies the equality $\ell''=-\ell'+qm$, we get an isomorphism $\Phi_3:\widetilde{X}_{f,\ell}\to \widetilde{X}_{f,\ell''}$ which fits into a commutative diagram  \begin{eqnarray*} \widetilde{X}_{f,\ell} & \stackrel{\Phi_3}{\xrightarrow{\hspace*{1cm}}} & \widetilde{X}_{f,\ell''}\\ \rho_{f,\ell}\bigg\downarrow &  & \bigg\downarrow\rho_{f,\ell''}\\ \textrm{Spec}(\C[t^{\pm1}]) & \stackrel{t\mapsto t}{\xrightarrow{\hspace*{1cm}}} & \textrm{Spec}(\C[t^{\pm1}]). \end{eqnarray*} This concludes the proof, since the above diagram implies that $Y_{\ell}$ and $Y_{\ell''}$ are isomorphic.  \end{proof}



 \section{Factorial affine surfaces failing Laurent Cancellation}
 As a first application of the previous techniques, we present new explicit examples of factorial surfaces failing Laurent cancellation. They are all realized as hypersurfaces $\widetilde{X}_{f,\ell}$ for irreducible  polynomials $f\in \mathbb{C}[x,y]$ which are semi-invariant under a faithful linear $\mathbb{G}_m$-action on $\mathbb{A}^2=\textrm{Spec}(\C[x,y])$ with positive weights. More precisely,  given positive integers $p,q\geq1$, we consider the polynomial $f=x^p+y^q$ which is semi-invariant of weight $m=pq$ under the action $\mu:\mathbb{G}_m\times\mathbb{A}^2\to\mathbb{A}^2$ defined by $\mu(\lambda,(x,y))=(\lambda^qx,\lambda^py)$.

Note that if $p$ or $q$ is equal to $1$, then the complements $X_f$ of the zero loci of  polynomials $f$  are  isomorphic to $\mathbb{A}^1\times \mathbb{A}^1_*$, and so are all the associated surfaces $\widetilde{X}_{f,\ell}$, $\ell\in\{1,\ldots,m\}$. 
Therefore, let $p,q$ be both greater than $1$, and relatively prime. Then  ${X}_{f}$ is isomorphic to the product of the smooth affine Fermat type curve $C=\{x^p+y^q=1\}\subset \mathbb{A}^2$ with $\mathbb{A}^1_*$. Recall that a smooth projective model $\overline{C}$ of $C$ has genus $g(\overline{C})=\frac{(p-1)(q-1)}{2}>0$. Consequently,  the logarithmic Kodaira dimension of $\widetilde{X}_{f,1}\simeq X_f$ is equal to $1$. Since the logarithmic Kodaira dimension is invariant under finite \'etale covers, all associated surfaces $\widetilde{X}_{f,\ell}$ are also of logarithmic Kodaira dimension  $1$. 

All surfaces $X_{f}$ obtained in this way are factorial and the associated surfaces $\widetilde{X}_{f,\ell}$ are $\mathbb{A}_{*}^{1}$-uniruled. They are in fact canonically $\mathbb{A}_{*}^{1}$-ruled over $\mathbb{A}^{1}$ by the restriction $q_{\ell}:\widetilde{X}_{f,\ell}\rightarrow\mathbb{A}^{1}$
of the rational map $V_{\ell}\dashrightarrow\mathbb{P}^{1}$ defined
by the mobile part of the divisor $K_{V_{\ell}}+B_{\ell}$ on a smooth
projective completion $V_{\ell}$ of $\widetilde{X}_{f,\ell}$ with reduced
SNC boundary $B_{\ell}$ (see e.g. \cite[Chapter 2, Section 6]{MiBook}). In view of Corollaries \ref{cor:nonruled} and \ref{cor:Kod1}
we thus expect to find counterexamples to the Laurent Cancellation Problem
among these surfaces. \\

The first case to consider is the case $(p,q)=(2,3)$. Nevertheless, it turns out to be deceptive. Indeed, on the one hand,   Proposition \ref{prop:isos-explicites} implies that every surface $\widetilde{X}_{f,\ell}$ with $f=x^{2}+y^{3}$ is isomorphic to $\widetilde{X}_{f,1}$, to $\widetilde{X}_{f,2}$ or to $\widetilde{X}_{f,3}$. On the other hand, we have the following result.

\begin{lemma}\label{lem:case23} Let $f=x^{2}+y^{3}\in\mathbb{C}[x,y]$, $n\geq1$, and let $\ell,\ell'\in\{1,2,3\}$ be distinct. Then for every $n\geq 0$, the varieties $\widetilde{X}_{f,\ell}\times(\mathbb{A}_{*}^{1})^{n}$ and $\widetilde{X}_{f,\ell'}\times(\mathbb{A}_{*}^{1})^{n}$ are not isomorphic.  
\end{lemma}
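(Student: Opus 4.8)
The plan is to recover $\ell$ from the canonical $\mathbb{A}_{*}^{1}$-fibration $q_{\ell}\colon\widetilde{X}_{f,\ell}\to\mathbb{A}^{1}$ introduced above, or rather from the multiset of multiplicities of its multiple fibres, and to argue that this multiset is not affected by multiplying with a torus. Because $\widetilde{X}_{f,\ell}$ is a smooth affine surface of logarithmic Kodaira dimension $1$ and $q_{\ell}$ is the restriction of the rational map defined by the mobile part of $K_{V_{\ell}}+B_{\ell}$, it is intrinsic: an isomorphism $\widetilde{X}_{f,\ell}\xrightarrow{\ \sim\ }\widetilde{X}_{f,\ell'}$ transforms $q_{\ell}$ into $q_{\ell'}$ up to an automorphism of the base, hence induces a multiplicity-preserving bijection between the multiple fibres of $q_{\ell}$ and those of $q_{\ell'}$. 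So for $n=0$ it is enough to determine these multiplicities and see that they differ for $\ell=1,2,3$.

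To compute them I would use the presentation of $\widetilde{X}_{f,\ell}$ coming from the \'etale trivialisation of $\rho_{f,\ell}$ in Section~1. Put $F=f^{-1}(1)=\{x^{2}+y^{3}=1\}$; its smooth completion $\overline{F}$ is an elliptic curve with $j$-invariant $0$ and $\overline{F}\setminus F=\{O\}$. The trivialisation identifies $\widetilde{X}_{f,\ell}$ with $(F\times\mathbb{A}_{*}^{1})/\mathbb{Z}_{6}$, where a generator of $\mathbb{Z}_{6}$ acts by $(c,v)\mapsto(\mu(\zeta_{6}^{\ell},c),\zeta_{6}v)$ for a fixed primitive sixth root of unity $\zeta_{6}$, and under this identification $q_{\ell}$ is the morphism induced by the first projection; in particular its base is $F/G_{\ell}$, where $G_{\ell}=\langle\mu(\zeta_{6}^{\ell},\cdot)|_{F}\rangle\subseteq\mathrm{Aut}(F)=\mathrm{Aut}(\overline{F},O)$ is cyclic of order $6$, $3$ and $2$ for $\ell=1,2,3$; then $\overline{F}/G_{\ell}\simeq\mathbb{P}^{1}$ and, as $O$ is fixed by all of $G_{\ell}$, $F/G_{\ell}=\overline{F}/G_{\ell}\setminus\{[O]\}\simeq\mathbb{A}^{1}$. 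A local computation in adapted coordinates, tracking how a $G_{\ell}$-fixed point of $F$ with stabiliser $H$ meets the free $\mathbb{Z}_{6}$-action on the $\mathbb{A}_{*}^{1}$-factor, shows that the multiple fibres of $q_{\ell}$ are exactly the fibres over the images of the ramification points of $\overline{F}\to\overline{F}/G_{\ell}$ distinct from $O$, each with multiplicity equal to the order of the corresponding stabiliser. Combining this with the Riemann--Hurwitz formula for $\overline{F}\to\overline{F}/G_{\ell}\simeq\mathbb{P}^{1}$, one obtains that $q_{1}$ has exactly two multiple fibres, of multiplicities $2$ and $3$; that $q_{2}$ has exactly two, both of multiplicity $3$; and that $q_{3}$ has exactly three, each of multiplicity $2$. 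Since $\{2,3\}$, $\{3,3\}$ and $\{2,2,2\}$ are pairwise distinct multisets, this settles the case $n=0$.

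For $n\geq1$ one upgrades the argument to products. Take as an SNC log-completion of $\widetilde{X}_{f,\ell}\times(\mathbb{A}_{*}^{1})^{n}$ the product $V_{\ell}\times(\mathbb{P}^{1})^{n}$ with boundary $B_{\ell}\times(\mathbb{P}^{1})^{n}$ plus the $2n$ coordinate hyperplanes of $(\mathbb{P}^{1})^{n}$. Since the toric boundary $\Delta$ of $(\mathbb{P}^{1})^{n}$ satisfies $K_{(\mathbb{P}^{1})^{n}}+\Delta\sim0$, the log-canonical divisor of this completion is $\mathrm{pr}_{1}^{*}(K_{V_{\ell}}+B_{\ell})$, so the rational map defined by its mobile part restricts on $\widetilde{X}_{f,\ell}\times(\mathbb{A}_{*}^{1})^{n}$ to $q_{\ell}\circ\mathrm{pr}_{1}$; its multiple fibres are the products with $(\mathbb{A}_{*}^{1})^{n}$ of those of $q_{\ell}$, with the same multiplicities. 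Invoking once more the intrinsic character of this fibration, any isomorphism $\widetilde{X}_{f,\ell}\times(\mathbb{A}_{*}^{1})^{n}\xrightarrow{\ \sim\ }\widetilde{X}_{f,\ell'}\times(\mathbb{A}_{*}^{1})^{n}$ produces a multiplicity-preserving bijection of the corresponding multiple fibres, so the computation above forces $\ell=\ell'$.

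I expect the main work to be the local determination of the multiplicities of the multiple fibres of $q_{\ell}$ from the cyclic-quotient presentation---unwinding how a stabiliser $H\subseteq G_{\ell}$ acts on a transverse coordinate of $F$ while $\mathbb{Z}_{6}$ acts on the $\mathbb{A}_{*}^{1}$-factor---together with the bookkeeping of ramification points and $G_{\ell}$-orbits on the $j=0$ curve $\overline{F}$, which is elementary but is exactly what produces the three distinct patterns. One should also record the standard facts that $q_{\ell}$ coincides with the logarithmic Iitaka fibration (legitimising both its intrinsicity and the comparison with the cyclic quotient) and that passing to a product with a torus does not change the relevant part of it.
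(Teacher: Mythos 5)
Your argument is correct and follows essentially the same route as the paper: both identify the log-canonical $\mathbb{A}_{*}^{1}$-fibration $q_{\ell}$, use that it is intrinsic and that the log-canonical fibration of the product with a torus is $q_{\ell}\circ\mathrm{pr}_{1}$, and then distinguish the three surfaces by the multisets $\{2,3\}$, $\{3,3\}$ and $\{2,2,2\}$ of multiplicities of multiple fibres. The only (harmless) difference is computational: the paper reads off these multiplicities directly from the explicit quotient maps $q_{1}=x^{2}t$, $q_{2}=xt$, $q_{3}=yt$, whereas you derive them from stabilizers in the cyclic quotient presentation $(F\times\mathbb{A}_{*}^{1})/\mathbb{Z}_{6}$, and both computations agree.
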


\begin{proof}  Let us identify for each $i\in\{1,2,3\}$ the surface $\widetilde{X}_{f,i}$
with the closed subvariety of $\mathrm{Spec}(\mathbb{C}[x,y,t^{\pm1}])$
defined by the equation, $t^i(x^{2}+y^{3})=1$. The log-canonical
$\mathbb{A}_{*}^{1}$-fibration $q_{i}:\widetilde{X}_{f,i}\rightarrow\mathbb{A}^{1}$
of $\widetilde{X}_{f,i}$ coincides with the algebraic quotient morphism
of the $\mathbb{G}_{m}$-action $\lambda\cdot(x,y,t)=(\lambda^{3}x,\lambda^{2}y,\lambda^{-6/i}t)$.
More concretely, we have 
\[
\begin{cases}
q_{1}:\widetilde{X}_{f,1}\rightarrow\mathbb{A}^{1}, & (x,y,v)\mapsto x^{2}t,\\
q_{2}:\widetilde{X}_{f,2}\rightarrow\mathbb{A}^{1}, & (x,y,v)\mapsto xt,\\
q_{3}:\widetilde{X}_{f,3}\rightarrow\mathbb{A}^{1}, & (x,y,v)\mapsto yt.
\end{cases}
\]
Furthermore, since $\kappa(\widetilde{X}_{f,i}\times(\mathbb{A}_{*}^{1})^{n})=\kappa(\widetilde{X}_{f,i})$,
the log-canonical fibration of $\widetilde{X}_{f,i}\times(\mathbb{A}_{*}^{1})^{n}$
coincides with $\tau_{i}=q_{i}\circ\mathrm{pr}_{\widetilde{X}_{f_{,i}}}:\widetilde{X}_{f,i}\times(\mathbb{A}_{*}^{1})^{n}\rightarrow\mathbb{A}^{1}$.
It is straightforward to check that the $\mathbb{A}_{*}^{1}$-fibrations
$q_{i}$, hence the $(\mathbb{A}_{*}^{1})^{n+1}$-fibrations $\tau_{i}$,
have different types of degenerate fibers: all their fibers are isomorphic
to $\mathbb{A}_{*}^{1}$ when equipped with their reduced structures,
but $q_{1}$ has two multiple fibers of multiplicities $2$ and
$3$, respectively, $q_{2}$ has two multiple fibers of multiplicity $3$ while
$q_{3}$ has three multiple fibers of multiplicity $2$. The lemma follows then, since
an isomorphism $\widetilde{X}_{f,i}\times(\mathbb{A}_{*}^{1})^{n}$ and
$\tilde{X}_{f,j}\times(\mathbb{A}_{*}^{1})^{n}$, $i,j=1,2,3$, must
be compatible with these log-canonical fibrations.
\end{proof}

\noindent The next simplest case, namely the case $(p,q)=(2,5)$, does
lead to counterexamples.

\begin{prop} \label{prop:case25} The hypersurfaces $X_1, X_3\subset\mathbb{A}^3=\mathrm{Spec}(\mathbb{C}[x,y,t])$ defined by the equation $t(x^{2}+y^{5})=1$ and $t^3(x^{2}+y^{5})=1$, respectively, are counterexamples to the Laurent Cancellation Problem. 
\end{prop}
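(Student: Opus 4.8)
The plan is to apply the general strategy developed in Section 1, specialized to the case $f = x^2 + y^5$, which is irreducible and semi-invariant of weight $m = pq = 10$ under the linear action $\mu(\lambda,(x,y)) = (\lambda^5 x, \lambda^2 y)$. By Proposition \ref{prop:isos-explicites}(2), since $\gcd(1,10) = \gcd(3,10) = 1$, the $\mathbb{A}_*^1$-cylinders $\widetilde{X}_{f,1}\times\mathbb{A}_*^1$ and $\widetilde{X}_{f,3}\times\mathbb{A}_*^1$ are isomorphic as abstract varieties; since $\widetilde{X}_{f,1}\simeq X_f$, this shows that the $\mathbb{C}^*$-cylinders over $X_1$ and $X_3$ are isomorphic. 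It therefore remains to prove that $X_1$ and $X_3$ are \emph{not} isomorphic. The hypersurfaces are smooth (the defining equation has nonvanishing differential wherever $t \neq 0$, which holds on the whole variety since $t^\ell f = 1$ forces $t \neq 0$) and factorial (their unit groups and the structure of $X_f$ pass to the étale $\mathbb{Z}_\ell$-covers), so this genuinely produces a counterexample to Laurent Cancellation once nonisomorphism is established.

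To prove nonisomorphism I would invoke Proposition \ref{lemma:fibre-generique} with $\ell = 1$, $\ell' = 3$: taking $\ell'' \geq 1$ congruent to $-3$ modulo $10$, say $\ell'' = 7$, an isomorphism $X_1 \simeq X_3$ would force the $\mathbf{k}$-variety $Y_1 \subset \mathbb{A}^2_{\mathbf{k}}$ defined by $x^2 + y^5 = t^{-1}$ to be isomorphic either to $Y_3$ ($x^2 + y^5 = t^{-3}$) or to $Y_7$ ($x^2 + y^5 = t^{-7}$), where $\mathbf{k} = \mathbb{C}(t)$. So the heart of the argument is to show that, over the field $\mathbf{k}$, the affine plane curves $x^2 + y^5 = c$ for $c = t^{-1}, t^{-3}, t^{-7}$ are pairwise non-isomorphic. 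Each such curve is a smooth affine curve of the form $x^2 = c - y^5$; its smooth projective model has genus $g = \frac{(2-1)(5-1)}{2} = 2$, and the curve has a single place at infinity. The natural invariant to distinguish them is the class of the divisor at infinity (a single point $P_c$) in the Jacobian of the projective model, or more precisely the isomorphism class of the pair (projective curve, point at infinity), which amounts to studying when a $\mathbf{k}$-isomorphism can carry one such affine curve onto another.

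The cleanest route is to analyze the possible isomorphisms directly using the exceptional geometry of the genus-$2$ curve $x^2 = c - y^5$: its unique point at infinity is a Weierstrass point, and the hyperelliptic involution $(x,y)\mapsto(-x,y)$ together with the automorphism $y \mapsto \zeta y$ ($\zeta^5 = 1$) generate the automorphism group of the projective model over $\overline{\mathbf{k}}$. An isomorphism between the affine curves $x^2 + y^5 = c$ and $x^2 + y^5 = c'$ must extend to the projective models and preserve the point at infinity, hence must be of the shape $x \mapsto \epsilon x$, $y \mapsto \lambda y$ with $\epsilon^2 c'$-compatibility, forcing $c'/c = \lambda^5 \epsilon^{-2}$ — but one has to be careful that a priori an isomorphism could also permute with the extra affine locus, so I would argue via the complement of the affine curve in its projective model. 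Running this out, $Y_c \simeq Y_{c'}$ over $\mathbf{k}$ forces $c'/c \in (\mathbf{k}^*)^5 \cdot (\mathbf{k}^*)^2 = (\mathbf{k}^*)^{\gcd(2,5)} = \mathbf{k}^*$ — which is vacuous — so the genus argument alone is insufficient and one must instead track the finer datum: $c'/c$ must lie in $\langle (\mathbf{k}^*)^2, (\mathbf{k}^*)^5\rangle$ up to the action coming from permuting branches, and here $t^{-3}/t^{-1} = t^{-2}$ and $t^{-7}/t^{-1} = t^{-6}$, while one checks $t^{-2}, t^{-6} \notin (\mathbf{k}^*)^2\cdot(\mathbf{k}^*)^5$ fails — so the right invariant is really the image of $c$ in $\mathbf{k}^*/(\mathbf{k}^*)^2$ combined with a discrete invariant of the $\mathbb{Z}_2\times\mathbb{Z}_5$ covering structure. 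I expect the main obstacle to be precisely this: pinning down the exact constraint an isomorphism $Y_c \simeq Y_{c'}$ imposes on $c'/c$, which requires a careful study of $\operatorname{Aut}_{\mathbf{k}}$ of these curves and of all the ways their affine models sit inside their projective completions; once the constraint is shown to be $c'/c \in (\mathbf{k}^*)^2$ (equivalently, comparing $v_t$-valuations modulo $2$), the conclusion $1 \not\sim 3$ is immediate since $t^{-1}$ and $t^{-3}$ (resp. $t^{-7}$) have valuations of opposite parity.
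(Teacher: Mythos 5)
Your reduction is the same as the paper's: Proposition \ref{prop:isos-explicites}(2) gives the isomorphism of $\mathbb{A}^1_*$-cylinders, and Proposition \ref{lemma:fibre-generique} reduces nonisomorphism of $X_1$ and $X_3$ to showing that the genus-$2$ curves $Y_1$, $Y_3$, $Y_7$ over $\kk=\C(t)$ are pairwise nonisomorphic, via the hyperelliptic (canonical) structure. But the final step, which is the actual content, goes wrong. First, a group-theoretic slip: from an isomorphism of the diagonal shape $x\mapsto\epsilon x$, $y\mapsto\lambda y$ one gets that $c/c'$ equals \emph{both} $\epsilon^2$ and $\lambda^5$, i.e.\ $c/c'\in(\kk^*)^2\cap(\kk^*)^5=(\kk^*)^{10}$, not $c'/c\in(\kk^*)^2\cdot(\kk^*)^5=\kk^*$; the condition is an intersection, not a product, and is far from vacuous. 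Having talked yourself into thinking the argument was empty, you then propose to salvage it with the square class of $c$ (valuation of $c$ modulo $2$), and this is simply false as an invariant and does not even separate the cases at hand: $v_t(t^{-1})=-1$ and $v_t(t^{-3})=-3$ have the \emph{same} parity, and $t^{-3}/t^{-1}=t^{-2}$ is a square in $\C(t)$, so your criterion cannot distinguish $Y_1$ from $Y_3$.

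The correct obstruction, which is what the paper extracts, lives in fifth-power classes, not square classes. Any $\C(t)$-isomorphism $\psi:Y_1\to Y_i$ extends to the normalized projective closures, is compatible with the canonical degree-$2$ covers (which are the projections $\mathrm{pr}_y$), and hence descends to an automorphism $y\mapsto a(t)y+b(t)$ of $\mathbb{A}^1_{\C(t)}$ carrying the branch locus $\{y^5=t^{-1}\}$ of $\pi_1$ onto $\{y^5=t^{-i}\}$ of $\pi_i$; this forces $b(t)=0$ (the five branch points sum to zero) and $a(t)^5=t^{-i+1}$, which is impossible because $t^{-2}$ and $t^{-6}$ are not fifth powers in $\C(t)$ (their $t$-valuations are not divisible by $5$). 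Note also that your restriction to diagonal isomorphisms is not justified as stated; the descent-to-$\mathbb{P}^1$ argument via the canonical linear system is exactly what legitimizes considering only affine maps $y\mapsto a(t)y+b(t)$ and what disposes of the translation term, so this step cannot be waved away by quoting the automorphism group over $\overline{\kk}$. As written, the proposal does not establish the nonisomorphism of $X_1$ and $X_3$.
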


\begin{proof}
Consider the polynomial $f=x^{2}+y^{5}\in\mathbb{C}[x,y]$ and observe that $X_1$ and $X_3$ are isomorphic to the hypersurfaces $\widetilde{X}_{f,1}$ and $\widetilde{X}_{f,3}$, respectively. Since $f$ is  semi-invariant of weight $10$ for the linear $\mathbb{G}_{m}$-action on $\mathbb{A}^{2}$ with weights $(5,2)$, the $\mathbb{A}^1_*$-cylinders $X_1\times\mathbb{A}^1_*$ and $X_3\times\mathbb{A}^1_*$ are isomorphic by Proposition~\ref{prop:isos-explicites}. To show that $X_1$ and $X_3$ are not isomorphic, it is enough, by virtue of  Proposition  \ref{lemma:fibre-generique}, to check that the curve $Y_1\subset\mathbb{A}_{\C(t)}^{2}=\textrm{Spec}(\C(t)[x,y])$ given by the equation $x^2+y^5=t^{-1}$ is isomorphic over $\mathbb{C}(t)$ neither  to the curve $Y_3$ of equation  $x^2+y^5=t^{-3}$, nor to that $Y_7$ of equation $x^2+y^5=t^{-7}$. This can be seen using elementary geometric arguments as follows.

A $\mathbb{C}(t)$-isomorphism $\psi:Y_{1}\stackrel{\sim}{\rightarrow}Y_{i}$
between $Y_{1}$ and $Y_{i}$, $i=3,7$, must (if it exists) extend to an isomorphism
$\Psi:\overline{Y}_{1}\stackrel{\sim}{\rightarrow}\overline{Y}_{i}$
between the respective normalizations $\overline{Y}_{1}$ and $\overline{Y}_{i}$
of the projective closures of $Y_{1}$ and $Y_{i}$ in $\mathbb{P}_{\mathbb{C}(t)}^{2}=\mathrm{Proj}(\mathbb{C}(t)[x,y,z])$.
A direct computation shows that all these curves have genus $2$ and
that their respective canonical degree $2$ covers $\overline{\pi}_{i}:\overline{Y}_{j}\rightarrow\mathbb{P}_{\mathbb{C}(v)}^{1}$
defined by the complete linear systems $|K_{\overline{Y}_{j}}|$,
$j=1,3,7$, coincide with the maps induced by the projection from
the point $[1:0:0]$ in $\mathbb{P}_{\mathbb{C}(t)}^{2}$. The restriction
of $\overline{\pi}_{i}$ to $Y_{i}$ coincides with the projection
$\pi_{i}=\mathrm{pr}_{y}:Y_{i}=\{x^{2}+y^{5}-t^{-i}=0\}\rightarrow\mathbb{A}_{\mathbb{C}(t)}^{1}=\mathrm{Spec}(\mathbb{C}(t)[y])$,
and since $\Psi^{*}K_{\overline{Y}_{i}}=K_{\overline{Y}_{1}}$, it
follows that there exists a $\mathbb{C}(t)$-automorphism $\theta$
of $\mathbb{A}_{\mathbb{C}(t)}^{1}$ for which the following diagram
commutes 
\begin{eqnarray*}
Y_{1} & \stackrel{\psi}{\xrightarrow{\hspace*{1cm}}} & Y_{i}\\
\pi_{1}\bigg\downarrow &  & \bigg\downarrow\pi_{3}\\
\mathbb{A}_{\mathbb{C}(t)}^{1} & \stackrel{\theta}{\xrightarrow{\hspace*{1cm}}} & \mathbb{A}_{\mathbb{C}(t)}^{1}.
\end{eqnarray*}

So $\theta$ is an affine transformation of the form $y\mapsto a(t)y+b(t)$
for some pair $(a(t),b(t))\in\mathbb{C}(t)^{*}\times\mathbb{C}(t)$
which maps the branch locus $B_{1}=\left\{ y^{5}-t^{-1}=0\right\}$
of $\pi_{1}$ isomorphically onto that $B_{i}=\{y^{5}-t^{-i}=0\}$
of $\pi_{i}$, $i=3,7$. Thus $b(t)=0$ necessarily and $a(t)^{5}=t^{-i+1}$,
which is absurd since neither $t^{-2}$ nor $t^{-6}$ admits a fifth
root in $\mathbb{C}(t)$. This shows that $X_{1}$ and $X_{3}$ are
not isomorphic. 
\end{proof}

\begin{rem} The above surfaces $X_{1}$ and $X_{3}$ are precisely
those constructed in a more geometric fashion in \cite[Subsection 2.2]{Dubouloz-Tori}.
This can be seen by considering  the structure of their
log-canonical $\mathbb{A}_{*}^{1}$-fibrations. With the same notation as in the proof of Proposition \ref{prop:case25}, these fibrations coincide  with the
morphisms 
\begin{eqnarray*}
q_{1}:X_{1}\rightarrow\mathbb{A}^{1},\;(x,y,t)\mapsto y^{5}t & \textrm{and} & q_{3}:X_{3}\rightarrow\mathbb{A}^{1},\;(x,y,t)\mapsto y^{5}t^{3}.
\end{eqnarray*}
Note that for the same reason as in the proof of Lemma~\ref{lem:case23}, every
isomorphism $\Psi$ between $X_{3}\times\mathbb{A}_{*}^{1}$ and $X_{1}\times\mathbb{A}_{*}^{1}$
must be compatible with the log-canonical $(\mathbb{A}_{*}^{1})^{2}$-fibrations
$q_{1}\circ\mathrm{pr}_{1}:X_{1}\times\mathbb{A}_{*}^{1}\rightarrow\mathbb{A}^{1}$
and $q_{3}\circ\mathrm{pr}_{1}:X_{3}\times\mathbb{A}_{*}^{1}\rightarrow\mathbb{A}^{1}$.

This does of course hold for the explicit isomorphism $\Psi:X_{3}\times\mathbb{A}_{*}^{1}\stackrel{\sim}{\rightarrow}X_{1}\times\mathbb{A}_{*}^{1}$
constructed via the procedure described in the proof of Proposition
\ref{prop:isos-explicites}, which is obtained as follows. With the notation of this proof, we have in our case $\ell=1$, $\ell'=3$, $m=10$, $d=1$, and we can take $a=3$, $b=0$, $\alpha=-1$ and $\beta=-3$. This gives the isomorphism  $\Psi:(x,y,t,u)\mapsto(u^{-5}x,u^{-2}y,t^{3}u^{10},t^{-1}u^{-3})$.
One thus has  $\Psi^{*}(y^{5}t)=(u^{-10}y^{5})(t^{3}u^{10})=y^{5}t^{3}$, 
hence a commutative diagram 
\begin{eqnarray*}
X_{3}\times\mathbb{A}_{*}^{1} & \stackrel{\Psi}{\xrightarrow{\hspace*{1cm}}} & X_{1}\times\mathbb{A}_{*}^{1}\\
 q_{3}\circ\mathrm{pr}_{1}\bigg\downarrow &  & \bigg\downarrow q_{1}\circ\mathrm{pr}_{1}\\
\mathbb{A}^{1} & = & \mathbb{A}^{1}.
\end{eqnarray*}
\end{rem}

Let us emphasize two ingredients which played a crucial role in the proof of the fact that the above surfaces $X_{1}$ and $X_{3}$ are not isomorphic. The first one is that the curves $Y_{i}$, $i=1,3,7$, are cyclic covers of $\mathbb{A}_{\mathbb{C}(t)}^{1}$. The second one is that  every isomorphism between them descends to an automorphism
of $\mathbb{A}_{\mathbb{C}(t)}^{1}$. Generalizing this type of arguments allows us to obtain the following infinite
families of counterexamples.

\begin{thm} Let $p\geq2$ be a prime number, let $q\geq3$ be an
integer relatively prime with $p$ and let $f=x^{p}+y^{q}\in\mathbb{C}[x,y]$.
Then for every $\ell\geq2$ relatively prime with $m=pq$, the surfaces
$\widetilde{X}_{f,1}$ and $\widetilde{X}_{f,\ell}$ are nonisomorphic, with
isomorphic $\mathbb{A}_{*}^{1}$-cylinders. 

\end{thm}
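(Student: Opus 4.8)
The plan is to follow exactly the strategy laid out for the special case $(p,q)=(2,5)$ in Proposition~\ref{prop:case25}, but now in the full generality of an arbitrary prime $p\geq 2$ and $q\geq 3$ coprime to $p$. First, since $f=x^p+y^q$ is irreducible (as $\gcd(p,q)=1$) and semi-invariant of weight $m=pq$ under the linear $\mathbb{G}_m$-action with weights $(q,p)$, Proposition~\ref{prop:isos-explicites}(2) immediately gives that $\widetilde{X}_{f,1}\times\mathbb{A}^1_*$ and $\widetilde{X}_{f,\ell}\times\mathbb{A}^1_*$ are isomorphic for every $\ell$ coprime with $m$; this disposes of the cylinder half of the statement. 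The substance is the nonisomorphism of $\widetilde{X}_{f,1}$ and $\widetilde{X}_{f,\ell}$. By Proposition~\ref{lemma:fibre-generique} it suffices to show that, over $\kk=\C(t)$, the affine plane curve $Y_1=\{x^p+y^q=t^{-1}\}$ is isomorphic over $\kk$ neither to $Y_\ell=\{x^p+y^q=t^{-\ell}\}$ nor to $Y_{\ell''}=\{x^p+y^q=t^{-\ell''}\}$, where $\ell''\geq 1$ is chosen congruent to $-\ell$ modulo $m$.

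The key geometric mechanism to reprove is that any $\kk$-isomorphism between two of these curves must descend to an affine automorphism of the $y$-line $\mathbb{A}^1_{\kk}$, i.e.\ be compatible with the projections $\pi_i=\mathrm{pr}_y\colon Y_i\to\mathbb{A}^1_{\kk}$. In the $(2,5)$ case this came from the canonical map of the genus-$2$ completion; in general I would argue as follows. An isomorphism $\psi\colon Y_1\to Y_i$ extends to an isomorphism $\Psi\colon\overline{Y}_1\to\overline{Y}_i$ of the normalizations of the projective closures, which are smooth projective curves of genus $g=\tfrac{(p-1)(q-1)}{2}$. Since $Y_i$ is the smooth affine model of a cyclic cover $x^p=t^{-i}-y^q$ of the $y$-line branched over the $q$ roots of $y^q=t^{-i}$ (together with the point at infinity, whose behavior depends on $\gcd(p,q)=1$), the cover $\pi_i$ is a $\Z_p$-cover; its deck group is the unique (or at least, a distinguished) $\Z_p$ in $\mathrm{Aut}(\overline{Y}_i)$ up to conjugacy, detected intrinsically—e.g.\ as the group whose quotient is rational, or via the fact that $p$ is prime so a $\Z_p$-action is generated by an automorphism of prime order whose quotient genus is $0$. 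Transporting this distinguished $\Z_p$ through $\Psi$ forces $\Psi$ to carry $\pi_1$ to $\pi_i$ up to an automorphism $\theta$ of $\mathbb{P}^1_\kk$, and the points at infinity being the unique totally ramified points (or the unique points with the relevant ramification profile) force $\theta$ to fix $\infty$, hence $\theta$ is affine, $y\mapsto a(t)y+b(t)$ with $(a(t),b(t))\in\kk^*\times\kk$.

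Once compatibility with the $y$-projection is established, the argument is identical to the $(2,5)$ case: $\theta$ must send the branch divisor $B_1=\{y^q=t^{-1}\}$ of $\pi_1$ isomorphically onto the branch divisor $B_i=\{y^q=t^{-i}\}$ of $\pi_i$. Comparing these two divisors of $q$ points in $\mathbb{A}^1_\kk$—both invariant under the $\mu_q\subset\kk^*$ scaling action on $y$, with ``centre of mass'' $0$—forces $b(t)=0$, and then $a(t)^q=t^{-i+1}$. For $i=\ell$ this reads $a(t)^q=t^{\,1-\ell}$ and for $i=\ell''$ it reads $a(t)^q=t^{\,1-\ell''}$. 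Since $\ell$ is coprime to $m=pq$ it is in particular coprime to $q$, so $q\nmid(\ell-1)$ unless $\ell\equiv 1\pmod q$; but I also need $\ell\not\equiv 1\pmod q$ and likewise $\ell''\not\equiv 1\pmod q$, which I would verify from $\ell\geq 2$ together with the congruence $\ell''\equiv-\ell\pmod{pq}$: if $q\mid(\ell-1)$ and $q\mid(\ell''-1)=q\mid(-\ell-1+kpq)$ then $q\mid 2$, contradicting $q\geq 3$; and the residual small-exponent checks (e.g.\ ruling out that $t^{1-\ell}$ or $t^{1-\ell''}$ is a $q$-th power in $\C(t)$, which happens precisely when $q\mid(1-\ell)$) come down to exactly the same divisibility, since $t$ generates a free factor of $\C(t)^*$. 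Hence no such $a(t)$ exists, $Y_1\not\cong Y_\ell$ and $Y_1\not\cong Y_{\ell''}$ over $\kk$, and Proposition~\ref{lemma:fibre-generique} yields $\widetilde{X}_{f,1}\not\cong\widetilde{X}_{f,\ell}$.

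The main obstacle I anticipate is the first step of the second paragraph: making rigorous and uniform (over the base field $\kk=\C(t)$, which is not algebraically closed) the claim that every isomorphism $\overline{Y}_1\to\overline{Y}_i$ respects the cyclic structure, i.e.\ identifying the $\Z_p$-cover $\pi_i$ intrinsically on $\overline{Y}_i$. One clean route is to observe that $\pi_i$ is, up to the choice of the line through $[1:0:0]$, the map attached to a canonical-type linear subsystem, or to use a Hurwitz/ramification count showing that the cover structure is the unique degree-$p$ map to a rational curve with the given branch data; the delicate point is the behaviour over the point at infinity, where one must use $\gcd(p,q)=1$ and primality of $p$ to pin down the ramification and the genus formula $g=\tfrac{(p-1)(q-1)}{2}$. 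After that, the rest is the same elementary divisibility argument as in Proposition~\ref{prop:case25}.
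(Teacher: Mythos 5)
Your outer strategy is exactly the paper's: isomorphic cylinders via Proposition \ref{prop:isos-explicites}, reduction via Proposition \ref{lemma:fibre-generique} to showing that the curve $Y_{1}$ over $\kk=\C(t)$ is isomorphic neither to $Y_{\ell}$ nor to $Y_{\ell''}$, then descent of any isomorphism to an affine automorphism of the $y$-line followed by a branch-locus computation. However, the step you yourself single out as ``the main obstacle'' --- that every isomorphism $\overline{Y}_{1}\to\overline{Y}_{j}$ is compatible with the degree-$p$ cyclic covers $\overline{\pi}_{i}$ --- is precisely what the paper's proof of this theorem is devoted to, and your proposal does not actually prove it. Your candidate justifications (the cyclic group of order $p$ is ``the'' subgroup with rational quotient; a Hurwitz count shows the degree-$p$ map to a rational curve is unique; a canonical-type linear subsystem as in the $(2,5)$ case) are not routine and are not carried out: a curve of genus $g\geq2$ may admit several pencils of degree $p$, and the uniqueness up to conjugacy of a prime-order cyclic covering group of $\mathbb{P}^{1}$ is a genuine theorem. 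The paper imports it from \cite{Gonzalez} and \cite{Nakajo} after base change to $K=\overline{\C(t)}$, then descends the resulting automorphism of $\mathbb{P}_{K}^{1}$ to $\C(t)$ using its uniqueness (the branch locus has at least three points since $q\geq2$), and finally restricts it to $\mathbb{A}^{1}$ using that $\overline{Y}_{i}\setminus Y_{i}$ is a single rational point preserved by the isomorphism. This chain of arguments is the heart of the proof and is missing from your sketch.

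There is also a concrete logical slip at the end. Proposition \ref{lemma:fibre-generique} gives an ``or'', so you must exclude \emph{both} $Y_{1}\simeq Y_{\ell}$ and $Y_{1}\simeq Y_{\ell''}$, i.e.\ you need $q\nmid(\ell-1)$ and $q\nmid(\ell''-1)$ separately; your verification only shows that the two congruences $\ell\equiv1$ and $\ell''\equiv1\pmod q$ cannot hold simultaneously (that is the ``$q\mid2$'' contradiction). One of them can in fact hold: for $(p,q)=(3,5)$ and $\ell=11$ (coprime with $15$ and $\not\equiv\pm1\pmod{15}$) one gets $a(t)^{5}=t^{-10}$, which is solvable in $\C(t)$, so the branch-locus argument alone does not separate $Y_{1}$ from $Y_{11}$. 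To close such cases one must also use the $x$-direction: an isomorphism lying over $y\mapsto a(t)y$ with $a(t)^{q}=t^{1-\ell}$ forces $(\psi^{*}x'/x)^{p}=t^{1-\ell}$, and since $\C(t)$ is algebraically closed in the function field of the geometrically integral curve $Y_{1}$, this would make $t^{1-\ell}$ a $p$-th power as well, whence $m\mid\ell-1$, i.e.\ $\ell\equiv1\pmod m$ --- a case which is implicitly excluded, since for $\ell\equiv\pm1\pmod m$ the surfaces are actually isomorphic by Proposition \ref{prop:isos-explicites}. Without some such supplement, your divisibility argument as written does not prove the statement.
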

\begin{proof}
The polynomial $f$ being an irreducible semi-invariant of weight
$m$ for the linear action of $\mathbb{G}_{m}$ on $\mathbb{A}^{2}$
with weight $(q,p)$, the fact that $X_{1}=\widetilde{X}_{f,1}$ and $X_{\ell}=\widetilde{X}_{f,\ell}$
have isomorphic $\mathbb{A}_{*}^{1}$-cylinders follows again from
Proposition \ref{prop:isos-explicites}. 

On the other hand, by virtue of Proposition
\ref{lemma:fibre-generique}, $X_{1}$ and $X_{\ell}$ are nonisomorphic
provided that the curve $Y_{1}=\{x^{p}+y^{q}-t^{-1}=0\}$ in $\mathbb{A}_{\mathbb{C}(t)}^{2}$
is isomorphic over $\mathbb{C}(t)$ neither to $Y_{\ell}=\{x^{p}+y^{q}-t^{-\ell}=0\}$
nor to $Y_{\ell''}=\{x^{p}+y^{q}-t^{-\ell''}=0\}$ for some integer
$\ell''\geq1$ congruent to $-\ell'$ modulo $m$. This can be proved in exactly the same way as for Proposition \ref{prop:case25} as soon as we show that, if it exists, a $\mathbb{C}(t)$-isomorphism
$\psi:Y_{1}\stackrel{\sim}{\rightarrow}Y_{j}$, $j=\ell,\ell''$ 
descends to a $\mathbb{C}(t)$-automorphism $\theta$ of $\mathbb{A}_{\mathbb{C}(t)}^{1}$
making the following diagram commutative 
\begin{eqnarray*}
Y_{1} & \stackrel{\psi}{\xrightarrow{\hspace*{1cm}}} & Y_{j}\\
\pi_{1}=\mathrm{pr}_{y}\bigg\downarrow &  & \bigg\downarrow\pi_{j}=\mathrm{pr}_{y}\\
\mathbb{A}_{\mathbb{C}(t)}^{1} & \stackrel{\theta}{\xrightarrow{\hspace*{1cm}}} & \mathbb{A}_{\mathbb{C}(t)}^{1}.
\end{eqnarray*}

Again, every $\mathbb{C}(t)$-isomorphism $\psi:Y_{1}\stackrel{\sim}{\rightarrow}Y_{j}$
uniquely extends to a $\mathbb{C}(t)$-isomorphism $\Psi:\overline{Y}_{1}\stackrel{\sim}{\rightarrow}\overline{Y}_{j}$
between the normalizations $\overline{Y}_{i}$ of the respective projective
closures of the curves $Y_{i}$ in $\mathbb{P}_{\mathbb{C}(t)}^{2}$.
The latter are smooth curves of genus $g=\frac{(p-1)(q-1)}{2}\geq2$
on which $\pi_{i}$ extends to a cyclic Galois cover $\overline{\pi}_{i}:\overline{Y}_{i}\rightarrow\mathbb{P}_{\mathbb{C}(t)}^{1}$
of prime order $p$, which is totally ramified over the $\mathbb{C}(t)$-rational
point $\mathbb{P}_{\mathbb{C}(t)}^{1}\setminus\mathbb{A}_{\mathbb{C}(t)}^{1}$.
Base changing to the algebraic closure $K$ of $\mathbb{C}(t)$, it
follows from \cite[Theorem 1]{Gonzalez} and  \cite[Main Theorem]{Nakajo}, which are actually
stated for complex curves but whose proofs carry on verbatim to smooth
curves defined over an algebraically closed field of characteristic
zero, that $\Psi_{K}:\overline{Y}_{1,K}\stackrel{\sim}{\rightarrow}\overline{Y}_{j,K}$
descends to a $K$-automorphism $\overline{\theta}_{K}:\mathbb{P}_{K}^{1}\stackrel{\sim}{\rightarrow}\mathbb{P}_{K}^{1}$ that maps the branch locus of $\overline{\pi}_{1,K}$ isomorphically
onto that of $\overline{\pi}_{j,K}$. Furthermore, since $q\geq2$,
the branch locus of $\overline{\pi}_{i,K}$ consists of at least three
points, implying that the $K$-automorphism $\overline{\theta}_{K}$
for which $\overline{\pi}_{j,K}\circ\Psi_{K}=\overline{\theta}_{K}\circ\overline{\pi}_{1,K}$
is unique, hence descends to a $\mathbb{C}(t)$-automorphism $\overline{\theta}$
such that $\overline{\pi}_{j}\circ\Psi=\overline{\theta}\circ\overline{\pi}_{1}$.
Since $\overline{Y}_{i}\setminus Y_{i}$ consists of a unique $\mathbb{C}(t)$-rational
point, say $\infty_{i}$, and since $\Psi(\infty_{1})=\infty_{j}$,
$\overline{\theta}$ restricts to an isomorphism $\theta:\mathbb{A}_{\mathbb{C}(t)}^{1}=\mathbb{P}_{\mathbb{C}(t)}^{1}\setminus\overline{\pi}_{1}(\infty_{1})\stackrel{\sim}{\rightarrow}\mathbb{A}_{\mathbb{C}(t)}^{1}=\mathbb{P}_{\mathbb{C}(t)}^{1}\setminus\overline{\pi}_{j}(\infty_{j})$
for which the diagram above commutes, as desired. 
\end{proof}

 \section{On the case of negative logarithmic Kodaira dimension} 

The aim of this section is to construct explicit examples of smooth affine varieties of negative logarithmic Kodaira
dimension  failing  the Laurent Cancellation property. Let us first remark that such examples must be of dimension at least three. Indeed, we can extend a result of \cite{Dubouloz-Tori}, that states that  Laurent Cancellation does hold
for smooth factorial affine surfaces of negative logarithmic Kodaira
dimension, to the case  of  arbitrary smooth affine
surfaces of negative logarithmic Kodaira dimension.

\begin{thm}
Two smooth affine surfaces $S$ and $S'$ of negative logarithmic Kodaira dimension
have isomorphic $\mathbb{A}_{*}^{1}$-cylinders $S\times\mathbb{A}_{*}^{1}$
and $S'\times\mathbb{A}_{*}^{1}$ if and only if they are isomorphic. 
\end{thm}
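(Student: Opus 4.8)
**The plan is to reduce the problem to a known cancellation statement about affine surfaces by combining the structure theory of affine-ruled surfaces with the descent of cylinder isomorphisms along log-canonical fibrations.** The "only if" direction is the substantive one. Let $S$ and $S'$ be smooth affine surfaces with $\kappa(S)=\kappa(S')=-\infty$ and suppose $\Phi\colon S\times\mathbb{A}^1_*\xrightarrow{\sim}S'\times\mathbb{A}^1_*$ is an isomorphism. A smooth affine surface of negative logarithmic Kodaira dimension is affine-ruled, i.e.\ it admits an $\mathbb{A}^1$-fibration over a smooth affine curve. The first step is to understand the invertible functions and the Picard group of $S\times\mathbb{A}^1_*$ in order to locate a distinguished base curve intrinsically; concretely, $\mathcal{O}(S\times\mathbb{A}^1_*)^*/\C^*\simeq\mathcal{O}(S)^*/\C^*\times\Z$, and by stripping off the $\Z$-factor generated by the coordinate on $\mathbb{A}^1_*$ one recovers $\mathcal{O}(S)^*$ and hence whether $S$ is of the form $C\times\mathbb{A}^1$, $C\times\mathbb{A}^1_*$, or a genuine $\mathbb{A}^1$-fibration with a base of higher unit rank.

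**Next I would treat the cases according to the "shape" of the $\mathbb{A}^1$-fibration, using the first author's earlier result as a black box wherever the surface is factorial.** If $S$ is factorial (equivalently $\mathrm{Pic}(S)=0$ and $\mathcal{O}(S)^*=\C^*$), then $S\times\mathbb{A}^1_*$ is factorial too, so $S'$ is factorial, and one invokes the cited result of \cite{Dubouloz-Tori} verbatim. The new content is the non-factorial case. Here the strategy is to exhibit the $\mathbb{A}^1$-fibration $S\to B$ (say the one extracted from the log-MMP on a completion, as in the affine-ruled surface dictionary of \cite{MiBook}) as canonically determined up to the obvious automorphisms, so that $S\times\mathbb{A}^1_*\to B$ is likewise intrinsic and $\Phi$ fits into a commutative square over an isomorphism $B\xrightarrow{\sim}B'$. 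Restricting $\Phi$ over the generic point of $B$ then gives an isomorphism of the generic fibers, which are forms of $\mathbb{A}^1_{\C(B)}\times\mathbb{A}^1_*$ over $\C(B)$; a descent/Galois-cohomology argument (a form of $\mathbb{A}^1\times\mathbb{A}^1_*$ over a field of characteristic zero is a product because $\mathrm{Pic}$ and the unit group are unobstructed) shows that $S$ and $S'$ become, fiberwise, the same twisted product, and patching over $B$ recovers $S\simeq S'$.

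**The "if" direction is immediate**: if $S\simeq S'$ then trivially $S\times\mathbb{A}^1_*\simeq S'\times\mathbb{A}^1_*$.

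**The main obstacle I anticipate is the non-factorial, non-product case**, where the base curve $B$ may have nontrivial units or the $\mathbb{A}^1$-fibration may have multiple fibers that are only visible after passing to a completion. Two things must be argued carefully there: first, that the $\mathbb{A}^1$-fibration $S\to B$ is recovered intrinsically from $S\times\mathbb{A}^1_*$ (the point being that $\kappa(S\times\mathbb{A}^1_*)=-\infty$ as well, so one cannot use the log-canonical fibration as in the $\kappa=1$ arguments of Section~2; instead one must characterize $B$ via $\mathcal{O}^*$, $\mathrm{Pic}$, and the Makar-Limanov/Derksen-type invariants, or by the MMP-theoretic description of the base of the affine ruling and its insensitivity to the $\mathbb{A}^1_*$-factor); and second, that an isomorphism of the generic fibers over $\C(B)$, together with compatibility of the multiple-fiber data downstairs, forces $S\simeq S'$ globally — this is where one repackages the argument of \cite{Dubouloz-Tori} for the factorial case and checks it goes through once the boundary/Picard bookkeeping is controlled.
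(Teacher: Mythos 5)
Your opening moves (Miyanishi--Sugie affine ruling, unit-group bookkeeping, quoting the earlier cancellation results to dispose of a special case) match the paper's starting point, but the two places where you locate the real work are exactly where your argument does not go through. First, you base the descent of $\Phi$ on the claim that the $\mathbb{A}^1$-fibration $S\to B$ is canonically determined, so that $S\times\mathbb{A}^1_*\to B\times\mathbb{A}^1_*$ is intrinsic. For surfaces of negative logarithmic Kodaira dimension the affine ruling is in general not unique (as you yourself note, there is no log-canonical fibration to appeal to), and no Makar-Limanov/Derksen or MMP argument is supplied that would make it canonical; in fact none is needed. The paper's dichotomy is by units, not factoriality (your parenthetical ``factorial $\Leftrightarrow$ $\mathrm{Pic}=0$ and $\mathcal{O}^*=\C^*$'' is also incorrect): the case where all invertible functions on $S$ or $S'$ are constant is settled by citing Freudenburg and \cite{Dubouloz-Tori}, and in the remaining case $B$ and $B'$ are affine curves of nonnegative Kodaira dimension, so every morphism $\mathbb{A}^1\to B'\times\mathbb{A}^1_*$ is constant; since all fiber components of $\rho$ are copies of $\mathbb{A}^1$, \emph{any} isomorphism of cylinders automatically descends to $\psi:B\times\mathbb{A}^1_*\to B'\times\mathbb{A}^1_*$, with no canonicity of $\rho$ or $\rho'$ required.

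Second, your endgame --- an isomorphism of generic fibers over $\C(B)$ plus Galois descent and ``patching over $B$'' --- is not an argument: the generic fiber sees nothing of the degenerate and multiple fibers, which is precisely where two non-isomorphic $\mathbb{A}^1$-fibered surfaces with the same generic fiber differ, and no mechanism is given either for the patching or for producing the identification $B\simeq B'$ in the first place. The paper instead descends $\psi$ one step further to an isomorphism $\overline{\psi}:B\to B'$: by Iitaka--Fujita strong cancellation when $\kappa(B)\neq0$ or $\kappa(B')\neq0$, and, when $B\simeq B'\simeq\mathbb{A}^1_*$, by an explicit computation with monomial automorphisms of the torus $B\times\mathbb{A}^1_*$, using that $\psi$ must permute the vertical curves lying over the points with degenerate fibers (the leftover subcase being that both $S$ and $S'$ are the trivial $\mathbb{A}^1$-bundle over $\mathbb{A}^1_*$, which your proposal does not treat). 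Finally, after base-changing so that $B'=B$ and $\overline{\psi}=\mathrm{id}_B$, the proof ends with a short section trick: $\psi$ carries the section $B\times\{1\}$ to a section $C'$ of $\mathrm{pr}_{B'}$, and $\Psi$ restricts to an isomorphism $\pi^{-1}(B\times\{1\})\simeq S\to\pi'^{-1}(C')\simeq S'$. None of these three steps (descent without canonicity, descent to the base including the $\kappa(B)=0$ case, section trick) is present in your proposal, so as written it has a genuine gap at its core.
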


\begin{proof}
By \cite{MiSu80}, a smooth affine surface of negative logarithmic Kodaira dimension
admits a faithfully flat morphism $\rho:S\rightarrow B$ over a smooth
curve $B$, with generic fiber isomorphic to the affine line over
the function field of $B$, called an $\mathbb{A}^{1}$-fibration.
Thus $\Gamma(S,\mathcal{O}_{S}^{*})=\rho^{*}\Gamma(B,\mathcal{O}_{B}^{*})$
and combined with the fact that cancellation holds when all invertible
functions on $S$ or $S'$ are constant (see \cite{Freudenburg,Dubouloz-Tori}),
we can restrict from now on to the case where $S$ and $S'$ admit
$\mathbb{A}^{1}$-fibrations $\rho:S\rightarrow B$ and $\rho':S'\rightarrow B'$
over smooth curves $B$ and $B'$ respectively, admitting non constant
invertible functions. In particular $B$ and $B'$ are affine, of
nonnegative logarithmic Kodaira dimension, implying that every morphism from $\mathbb{A}^{1}$
to $B\times\mathbb{A}_{*}^{1}$ or $B'\times\mathbb{A}_{*}^{1}$ is
constant. Since all irreducible components of fibers of $\rho:S\rightarrow B$
over closed points of $B$ are isomorphic to $\mathbb{A}^{1}$, it
follows that every isomorphism $\Psi:S\times\mathbb{A}_{*}^{1}\stackrel{\sim}{\rightarrow}S'\times\mathbb{A}_{*}^{1}$
descends to an isomorphism $\psi:B\times\mathbb{A}_{*}^{1}\rightarrow B'\times\mathbb{A}_{*}^{1}$
making the following diagram commutative 
\begin{eqnarray*}
S\times\mathbb{A}_{*}^{1} & \stackrel{\Psi}{\xrightarrow{\hspace*{1cm}}} & S'\times\mathbb{A}_{*}^{1}\\
\pi=(\rho,\mathrm{pr}_{2})\bigg\downarrow &  & \bigg\downarrow\pi'=(\rho',\mathrm{pr}_{2})\\
B\times\mathbb{A}_{*}^{1} & \stackrel{\psi}{\xrightarrow{\hspace*{1cm}}} & B'\times\mathbb{A}_{*}^{1}.
\end{eqnarray*}
If either $\kappa(B)\neq0$ or $\kappa(B')\neq0$ then, by Iitaka-Fujita
strong cancellation Theorem \cite{IF}, $\psi$ descends further to
an isomorphism $\overline{\psi}:B\rightarrow B'$ such that $\mathrm{pr}_{B'}\circ\psi=\overline{\psi}\circ\mathrm{pr}_{B}$.
In the case where $\kappa(B)=\kappa(B')=0$, $B$ and $B'$ are both
isomorphic to $\mathbb{A}_{*}^{1}=\mathrm{Spec}(\mathbb{C}[u^{\pm1}])$
and we have the following alternative: either $\rho:S\rightarrow B$
is a locally trivial, hence trivial, $\mathbb{A}^{1}$-bundle and
then so is $\rho':S'\rightarrow B'$ by the commutativity of the above
diagram or $\rho:S\rightarrow B$ has at least a degenerate fiber.
In the second case, letting $b_{1},\ldots,b_{s}\in B$ and $b_{1}',\ldots,b_{s'}'\in B'$
be the closed points over which the fibers or $\rho$ and $\rho'$
respectively are degenerate, the morphisms $\pi$ and $\pi'$ degenerate
respectively over the sections $\{b_{i}\}\times\mathbb{A}_{*}^{1}$
and $\{b_{i}'\}\times\mathbb{A}_{*}^{1}$ of the projections $\mathrm{pr}_{\mathbb{A}_{*}^{1}}:B\times\mathbb{A}_{*}^{1}\rightarrow\mathbb{A}_{*}^{1}$
and $\mathrm{pr}_{\mathbb{A}_{*}^{1}}:B'\times\mathbb{A}_{*}^{1}\rightarrow\mathbb{A}_{*}^{1}$.
Identifying $B\times\mathbb{A}_{*}^{1}$ and $B'\times\mathbb{A}_{*}^{1}$
with the torus $\mathbf{T}^{2}=\mathrm{Spec}(\mathbb{C}[u^{\pm1},t^{\pm1}])$,
the automorphism $\psi$ has the form $(u,t)\mapsto(\lambda_{1}u^{\alpha}t^{\beta},\lambda_{2}u^{\gamma}t^{\delta})$
where $\lambda_{i}\in\mathbb{C}^{*}$ and $\left(\begin{array}{cc}
\alpha & \beta\\
\gamma & \delta
\end{array}\right)\in\mathrm{GL}_{2}(\mathbb{Z})$. The condition $\psi(\{b_{i}\}\times\mathbb{A}_{*}^{1})=\{b_{j(i)}'\}\times\mathbb{A}_{*}^{1}$,
$i=1,\ldots,s$, $j(i)\in\{1,\ldots,s'\}$ implies that $\beta=0$
hence that $\alpha\delta=\pm1$. It follows that $\psi$ descends
to an isomorphism $\overline{\psi}:B\rightarrow B'$ of the form $u\mapsto\lambda_{1}u^{\pm1}$.
Summing up, either $S$ and $S'$ are both isomorphic to the trivial
$\mathbb{A}^{1}$-bundle over $\mathbb{A}_{*}^{1}$ and we are done
already, or we have a commutative diagram

\begin{eqnarray*}
S\times\mathbb{A}_{*}^{1} & \stackrel{\Psi}{\xrightarrow{\hspace*{1cm}}} & S'\times\mathbb{A}_{*}^{1}\\
\pi=(\rho,\mathrm{pr}_{2})\bigg\downarrow &  & \bigg\downarrow\pi'=(\rho',\mathrm{pr}_{2})\\
B\times\mathbb{A}_{*}^{1} & \stackrel{\psi}{\xrightarrow{\hspace*{1cm}}} & B'\times\mathbb{A}_{*}^{1}\\
\mathrm{pr}_{B}\bigg\downarrow &  & \bigg\downarrow\mathrm{pr}_{B'}\\
B & \stackrel{\overline{\psi}}{\xrightarrow{\hspace*{1cm}}} & B'
\end{eqnarray*}
for some isomorphism $\overline{\psi}:B\stackrel{\sim}{\rightarrow}B'$.
Replacing $S'$ by the isomorphic surface $S'\times_{B'}B$, we may
assume further that $B'=B$ and that $\overline{\psi}=\mathrm{id}_{B}$.
The commutativity of the above diagram then implies that $\psi$ maps
the section $C=B\times\{1\}\subset B\times\mathbb{A}_{*}^{1}$ isomorphically
onto a section $C'\subset B'\times\mathbb{A}_{*}^{1}$ or $\mathrm{pr}_{B'}$
while $\Psi$ maps $\pi^{-1}(C)$ isomorphically onto ${\pi'}^{-1}(C')$.
The assertion follows since $\pi^{-1}(C)$ and ${\pi'}^{-1}(C')$
are isomorphic to $S$ and $S'$ respectively. 
\end{proof}

From now on, we will use the following notation.

\begin{notation} Given integers $n\geq1$ and $m\geq2$, we  denote  by $f_{n,m}$ the polynomial $$f_{n,m}=(\prod_{i=1}^{n}x_{i}^{2})z-y^{m}\in\mathbb{C}[x_{1},\ldots,x_{n}][y,z].$$ \noindent Since  $f_{n,m}$  is a semi-invariant of weight $m$ for the linear $\mathbb{G}_{m}$-action on $\mathbb{A}^{n+2}=\mathrm{Spec}(\mathbb{C}[x_{1},\ldots,x_{n}][y,z])$ with weights $(1,\ldots,1,1,m-2n)$, we will follow the notation of the previous sections and consider, for every $\ell\in\mathbb{Z}_{\geq1}$, the hypersurface $X_{n,m,\ell}=\widetilde{X}_{f_{n,m},\ell}$ of $\mathbb{A}^{n+3}=\mathrm{Spec}(\mathbb{C}[x_{1},\ldots,x_{n}][y,z,t])$ defined by the equation $$t^{\ell}\left(x_{1}^{2}\cdots x_{n}^{2} z-y^{m}\right)=1.$$ \end{notation}

\begin{lemma}\label{lemma:factorial}
Every hypersurface $X_{n,m,\ell}$ defined above is smooth and has negative logarithmic Kodaira dimension. Moreover, $X_{n,m,\ell}$ is factorial provided that $\ell$ is relatively prime with $m$.
\end{lemma}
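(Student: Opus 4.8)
The plan is to treat the three assertions — smoothness, negative logarithmic Kodaira dimension, and factoriality — separately, since each rests on a different mechanism.

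\textbf{Smoothness.} I would work directly with the defining equation $g := t^{\ell}(x_1^2\cdots x_n^2 z - y^m) - 1 = 0$ in $\mathbb{A}^{n+3}$ and compute the partial derivatives. Since $t$ is a unit on $X_{n,m,\ell}$ (from $t^{\ell}f_{n,m}=1$), the equation $\partial g/\partial z = t^{\ell}x_1^2\cdots x_n^2 = 0$ forces some $x_i=0$ at a singular point; but then $f_{n,m} = -y^m$, and $t^{\ell}(-y^m)=1$ forces $y\neq 0$, whence $\partial g/\partial y = -m t^{\ell} y^{m-1}\neq 0$. So there are no singular points. This is a short, routine Jacobian check.

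\textbf{Negative logarithmic Kodaira dimension.} The key observation is that $X_{n,m,\ell}$ carries an $\mathbb{A}^1$-fibration (or at least is $\mathbb{A}^1$-uniruled), which forces $\kappa = -\infty$. Concretely, on the open set where $x_1\cdots x_n \neq 0$ and $t\neq 0$ — which is all of $X_{n,m,\ell}$ after noting $t$ is a unit — one can solve $z = (y^m + t^{-\ell})/(x_1^2\cdots x_n^2)$, exhibiting $X_{n,m,\ell}\cap\{x_1\cdots x_n\neq 0\}$ as isomorphic to $(\mathbb{A}^1_*)^{n}\times\mathbb{A}^1_y\times\mathbb{A}^1_{*,t}$, hence rational and covered by affine lines (the $y$-lines). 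More carefully, projecting to $\mathrm{Spec}(\mathbb{C}[x_1,\ldots,x_n,t^{\pm1}])$ realizes $X_{n,m,\ell}$ as an $\mathbb{A}^1$-bundle over the locus where $x_1\cdots x_n\neq 0$ and trivial elsewhere is not quite right; instead I would simply note that the generic fiber of the projection $(x_1,\ldots,x_n,z,t)$ — i.e. forgetting $y$ — no: the cleanest statement is that the morphism $X_{n,m,\ell}\to\mathbb{A}^{n+1}=\mathrm{Spec}(\mathbb{C}[x_1,\ldots,x_n,t^{\pm1}])$ given by $(x_1,\ldots,x_n,y,z,t)\mapsto(x_1,\ldots,x_n,t)$ has all fibers over the dense open $\{x_1\cdots x_n\neq 0\}$ isomorphic to $\mathbb{A}^1$ (with coordinate $y$, since $z$ is then determined). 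Thus $X_{n,m,\ell}$ contains a Zariski-dense family of affine lines, so it is $\mathbb{A}^1$-uniruled, and therefore $\kappa(X_{n,m,\ell})=-\infty$.

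\textbf{Factoriality.} This is the main obstacle and requires more than a local computation. I would use the standard Nagata-type criterion: for a Noetherian UFD $R$, if $a\in R$ is such that $R/(a)$ is a domain and $a$ is prime, then $R$ is determined by its units and divisor class group. More precisely, the strategy is: the coordinate ring is $A_{\ell}:=\mathbb{C}[x_1,\ldots,x_n,y,z,t^{\pm1}]/(t^{\ell}f_{n,m}-1)$, and since $t$ is a unit we can rewrite $z = t^{-\ell}(1+t^{\ell}y^m)/(x_1^2\cdots x_n^2)$ — no, $z$ is not in the subring generated by the others unless we invert $x_i$. Instead: $A_{\ell}$ is the localization at $f_{n,m}$ of $\mathbb{C}[x_1,\ldots,x_n,y,z]$ when $\ell=1$, and for general $\ell$ it is an étale $\mathbb{Z}_\ell$-cover of that, by Notation~\ref{notation:X_fl}. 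When $\ell=1$, $A_1 = \mathbb{C}[x_1,\ldots,x_n,y,z][f_{n,m}^{-1}]$ is a localization of a polynomial ring, hence a UFD. For general $\ell$ coprime to $m$, I would invoke the $\mathbb{A}^1_*$-cylinder trick from Proposition~\ref{prop:isos-explicites}: $X_{n,m,\ell}\times\mathbb{A}^1_*\cong X_{n,m,1}\times\mathbb{A}^1_*$ (since $\gcd(\ell,m)=1=\gcd(1,m)$), and $X_{n,m,1}\times\mathbb{A}^1_*$ is a localization of a polynomial ring hence factorial; and factoriality of $X\times\mathbb{A}^1_*$ together with $\mathcal{O}(X\times\mathbb{A}^1_*)^*/\mathbb{C}^* \cong \mathcal{O}(X)^*/\mathbb{C}^*\oplus\mathbb{Z}$ descends to factoriality of $X$ via the exact sequence relating $\mathrm{Cl}(X)$ and $\mathrm{Cl}(X\times\mathbb{A}^1_*)$ (they are isomorphic, since $\mathrm{Pic}$ and $\mathrm{Cl}$ are invariant under multiplication by $\mathbb{A}^1_*$ for these rings — this is where one needs $f_{n,m}$ irreducible so that $\mathrm{Cl}(X_{n,m,1})$ is generated by the classes coming from the irreducible components of $\{f_{n,m}=0\}$, of which there is essentially one up to the $x_i$ and $y$ themselves). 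The delicate point to get right is computing $\mathrm{Cl}(X_{n,m,1}) = \mathrm{Cl}(\mathbb{A}^{n+2}\setminus\{f_{n,m}=0\})$: removing the irreducible hypersurface $\{f_{n,m}=0\}$ from $\mathbb{A}^{n+2}$ kills exactly the class of that hypersurface in the (trivial) class group, so $\mathrm{Cl}(X_{n,m,1})=0$ provided $f_{n,m}$ is irreducible — which holds since $f_{n,m}=x_1^2\cdots x_n^2 z - y^m$ is linear in $z$ with $\gcd$ of coefficients $1$. I expect the bookkeeping of this class-group descent along the $\mathbb{A}^1_*$-cylinder isomorphism, and verifying it genuinely needs $\gcd(\ell,m)=1$, to be the real work.
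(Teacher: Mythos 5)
Your smoothness check and your uniruledness argument for negative logarithmic Kodaira dimension are essentially the paper's: the paper leaves smoothness implicit and obtains $\kappa=-\infty$ from the same dense open set $\mathcal{U}=X_{n,m,\ell}\setminus\{x_1\cdots x_n=0\}\simeq(\mathbb{A}_*^1)^{n+1}\times\mathbb{A}^1$ that underlies your family of $y$-lines. For factoriality, however, you take a genuinely different route. The paper works directly on $\mathcal{O}(X_{n,m,\ell})$: it shows that $x_n$ is a prime element because $\mathcal{O}(X_{n,m,\ell})/(x_n)$ is isomorphic to a quotient by $t^{\ell}y^m+1$, which is a domain precisely because $\gcd(\ell,m)=1$; it identifies the localization at $x_n$ with a Laurent polynomial extension of the coordinate ring of $X_{n-1,m,\ell}$ (for $n=1$, with $\mathbb{C}[y,x_1^{\pm1},t^{\pm1}]$), and concludes by Nagata's criterion and induction on $n$. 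This is self-contained and pinpoints exactly where coprimality enters. You instead transfer factoriality from $X_{n,m,1}\simeq X_{f_{n,m}}$ (a principal open subset of $\mathbb{A}^{n+2}$, trivially factorial) through the cylinder isomorphism $X_{n,m,\ell}\times\mathbb{A}_*^1\simeq X_{n,m,1}\times\mathbb{A}_*^1$ of Proposition \ref{prop:isos-explicites} (your only use of $\gcd(\ell,m)=1$), and then descend along $X\mapsto X\times\mathbb{A}_*^1$ via $\mathrm{Cl}(X\times\mathbb{A}_*^1)\simeq\mathrm{Cl}(X)$, equivalently via Nagata applied to the prime element $u$ of $\mathcal{O}(X)[u]$. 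That descent is valid, and it is the same mechanism the paper itself invokes in the proof of Corollary \ref{cor:Kod1}, where factoriality of $\widetilde{X}_{f,\ell}$ is deduced from that of $X_f\times\mathbb{A}_*^1$; so your proof is shorter once that general fact is granted, while the paper's induction is more elementary and independent of Proposition \ref{prop:isos-explicites}.

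Two small clean-ups. First, $\mathrm{Cl}(X\times\mathbb{A}_*^1)\simeq\mathrm{Cl}(X)$ is a general fact for normal varieties ($\mathrm{Cl}(X\times\mathbb{A}^1)\simeq\mathrm{Cl}(X)$ and $X\times\{0\}=\mathrm{div}(u)$ is principal), so your parenthetical tying it to the irreducibility of $f_{n,m}$, and the computation of unit groups, are unnecessary. Second, before invoking $\mathrm{Cl}$ (or Nagata with the prime $u$) you should record that $X_{n,m,\ell}$ is integral, hence, being smooth, normal: this holds because $t^{\ell}f_{n,m}-1$ has degree one in $z$ with leading coefficient $t^{\ell}x_1^2\cdots x_n^2$ coprime to the constant term $t^{\ell}y^m+1$ in $\mathbb{C}[x_1,\ldots,x_n,y,t^{\pm1}]$ — a point the paper also leaves implicit.
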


\begin{proof}
Consider the open subset $\mathcal{U}=X_{n,m,\ell}\setminus\{x_1\cdots x_n=0\}$ of $X_{n,m,\ell}$. Since 
$$\mathcal{U}=\{(x_1,\ldots,x_n,y,z,t)\in\mathbb{A}^{n+3}\mid x_1\cdots x_nt\neq0, z=x_1^{-2}\cdots x_n^{-2}(t^{-\ell}+y^m)\}\simeq(\mathbb{A}_{*}^{1})^{n+1}\times\mathbb{A}^{1},$$
 it follows that $\mathcal{U}$ has negative logarithmic Kodaira dimension, hence that $X_{n,m,\ell}$ has negative logarithmic Kodaira dimension too.

From now on, we suppose that $\ell$ is coprime with $m$.  Then, since $$\mathcal{O}(X_{n,m,\ell})/(x_n)\simeq\C[x_1,\ldots,x_{n-1},y,z,t]/(t^ly^m+1)$$ is an integral domain, when  $\ell$ is coprime with $m$, it follows that $(x_n)$ is a prime ideal of  $\mathcal{O}(X_{n,m,\ell})$. 
We further consider the localization $\mathcal{O}(X_{n,m,\ell})_{x_n}$ of $\mathcal{O}(X_{n,m,\ell})$ with respect to this ideal $(x_n)$. If $n=1$, we can express  $z=x_1^{-2}(y^m+t^{-\ell})$ and we thus get that $\mathcal{O}(X_{n,m,\ell})_{x_1}\simeq\C[y,x_1^{\pm1},t^{\pm1}]$ is factorial. By a well-known result of Nagata (see e.g.~\cite{Samuel-UFD}), this implies that $\mathcal{O}(X_{n,m,\ell})$ is factorial for $n=1$. By induction on $n$, it follows that 
\[\mathcal{O}(X_{n,m,\ell})_{x_n}\simeq\C[x_n^{\pm1}][x_1,\ldots,x_{n-1},y,z,t]/(t^{\ell}(\prod_{i=1}^{n-1}x_{i}^{2}z-y^{m})-1)\]
 is factorial for all $n\geq1$, and so Nagata's result allows us  to conclude that $\mathcal{O}(X_{n,m,\ell})$ is factorial.
\end{proof}

The next theorem is the main result of this section. It shows that we can find counterexamples to the Laurent Cancellation Problem among the  above varieties $X_{n,m,\ell}$.

\begin{thm}\label{main-thm}
Let $n\geq1$, $m\geq2$ and let $\ell,\ell'\geq1$ relatively prime with $m$ be such that $\ell'$
is not congruent to $\pm\ell$ modulo $m$. Then the hypersurfaces $$X_{n,m,\ell}=\{t^{\ell}\left(x_{1}^{2}\cdots x_{n}^{2} z-y^{m}\right)=1\}\subset\mathbb{A}^{n+3}$$
and $$X_{n,m,\ell'}=\{t^{\ell'}\left(x_{1}^{2}\cdots x_{n}^{2} z-y^{m}\right)=1\}$$ are nonisomorphic factorial affine
varieties of dimension $d=n+2$ with isomorphic $\mathbb{A}_{*}^{1}$-cylinders
$$X_{n,m,\ell}\times\mathbb{A}_{*}^{1}\simeq X_{n,m,\ell'}\times\mathbb{A}_{*}^{1}.$$ \end{thm}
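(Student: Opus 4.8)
The plan is to follow exactly the two-pronged strategy already set up in Section~1: the isomorphism of the cylinders comes for free from Proposition~\ref{prop:isos-explicites}, while the nonisomorphism of the $X_{n,m,\ell}$ themselves is reduced, via Proposition~\ref{lemma:fibre-generique}, to a statement about generic fibers over $\kk=\C(t)$. Concretely: since $f_{n,m}=x_1^2\cdots x_n^2 z - y^m$ is irreducible (its zero locus is irreducible because $z$ appears linearly with a nonzero coefficient outside $\{x_1\cdots x_n=0\}$) and semi-invariant of weight $m$ for the given linear $\mathbb{G}_m$-action, and $\ell,\ell'$ are coprime with $m$ with $\gcd(\ell,m)=\gcd(\ell',m)=1$, assertion (2) of Proposition~\ref{prop:isos-explicites} immediately gives $X_{n,m,\ell}\times\mathbb{A}^1_*\simeq X_{n,m,\ell'}\times\mathbb{A}^1_*$. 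Factoriality and smoothness of these hypersurfaces are Lemma~\ref{lemma:factorial}, and the dimension count $d=n+2$ is clear.

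For the nonisomorphism, suppose $X_{n,m,\ell}\simeq X_{n,m,\ell'}$. Let $\ell''\geq1$ be congruent to $-\ell'$ modulo $m$; by Proposition~\ref{lemma:fibre-generique} it suffices to show that the $\kk$-variety $Y_\ell=\{x_1^2\cdots x_n^2 z - y^m = t^{-\ell}\}\subset\mathbb{A}^{n+2}_\kk$ is isomorphic over $\kk$ to neither $Y_{\ell'}$ nor $Y_{\ell''}$. The key structural observation is that each $Y_j$ carries a natural morphism: eliminating $z$ on the open set $\{x_1\cdots x_n\neq0\}$ identifies that open set with $(\mathbb{A}^1_{*,\kk})^n\times\mathbb{A}^1_\kk$, and the complement $\{x_1\cdots x_n=0\}\cap Y_j$ is (for $\ell$, resp. $j$, coprime with $m$) the hypersurface $\{t^j y^m + 1 = 0\}$ sitting over each coordinate hyperplane. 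I would like to extract from $Y_j$ a $\kk$-invariant that distinguishes the three values of the exponent. The natural candidate, generalizing the branch-locus argument of Proposition~\ref{prop:case25}, is the following: the invertible functions on $Y_j$ are $\mathbb{C}[x_1^{\pm1},\ldots,x_n^{\pm1},\text{(constants)}]$ up to some care (they should be generated by $x_1,\ldots,x_n$ over $\C^*$, since the equation forces $z$ to be a regular function of the $x_i$'s and $y$ on the torus and $y$ itself is not invertible), so an isomorphism $Y_\ell\xrightarrow{\sim}Y_j$ induces a $\mathrm{GL}_n(\Z)$-type action on the $x_i$'s together with an affine reparametrization, and one tracks where the locus $\{x_i=0\}$ (equivalently, the "multiple" part of the divisor of $x_1\cdots x_n$) goes. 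On that locus the equation degenerates to $t^j y^m = -1$, i.e. $y^m = -t^{-j}$, and the obstruction is that $-t^{-j}$ admits an $m$-th root in $\C(t)$ for no $j$ coprime with $m$ — more precisely, $t^{-j}$ and $t^{-\ell}$ differ by an $m$-th power in $\C(t)^*$ only when $j\equiv\pm\ell\pmod{m}$ after accounting for the orientation-reversal $t\mapsto t^{-1}$ that Proposition~\ref{lemma:fibre-generique} already allows for. Since $\ell'\not\equiv\pm\ell\pmod m$, and $\ell''\equiv-\ell'$ so also $\ell''\not\equiv\pm\ell$, this yields the contradiction.

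The main obstacle, and the step I would spend the most care on, is the rigidity claim: that an abstract $\kk$-isomorphism $Y_\ell\xrightarrow{\sim}Y_j$ must respect the relevant structure — here, the decomposition into the open torus-times-affine-line piece and the boundary divisor $\{x_1\cdots x_n = 0\}$, together with its multiplicity-$2$ structure along each $\{x_i=0\}$. In the surface case ($n=1$) this came from the uniqueness of the log-canonical $\mathbb{A}^1_*$-fibration or the canonical linear system; in higher dimensions the right substitute is likely the observation that the divisor of the function $x_1\cdots x_n$ (which, being a product of units, is intrinsic up to units and up to permutation/inversion of a basis of $\mathcal{O}(Y_j)^*/\C^*\simeq\Z^n$) has a distinguished subscheme structure — each $x_i$ vanishes to order $2$ — and restricting the defining equation to it produces the cyclic cover $\{y^m=-t^{-j}\}$ of a point, whose isomorphism class over $\kk$ remembers $t^{-j} \bmod (\kk^*)^m$. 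Making this last reduction precise — identifying exactly which $\kk$-invariant of $Y_j$ encodes $j \bmod m$ up to sign, and checking that an arbitrary isomorphism preserves it — is where the real work lies; once it is in place, the numerical conclusion is immediate from $\gcd(m,\ell)=\gcd(m,\ell')=1$ and $\ell'\not\equiv\pm\ell\pmod m$.
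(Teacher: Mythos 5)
Your outer structure coincides with the paper's: the cylinder isomorphism is exactly Proposition \ref{prop:isos-explicites}(2), smoothness/factoriality/dimension are Lemma \ref{lemma:factorial}, and Proposition \ref{lemma:fibre-generique} reduces the nonisomorphism to showing that the $\kk=\C(t)$-hypersurfaces $Y_j=\{x_1^2\cdots x_n^2z-y^m=t^{-j}\}$ are pairwise nonisomorphic for $j=\ell,\ell',\ell''$. But that last statement is precisely the content of the paper's Proposition \ref{prop:Danielewski}, and you leave it unproven: you say yourself that making the rigidity step precise ``is where the real work lies.'' The paper carries out this work with locally nilpotent derivations: for every nonzero LND $\delta$ on $B_\lambda=\kk[x_1,\ldots,x_n,y,z]/(x_1^2\cdots x_n^2z-y^m-\lambda)$ one has $\Ker(\delta)=\kk[x_1,\ldots,x_n]$ and $\Ker(\delta^2)\subset\kk[x_1,\ldots,x_n,y]$ (Makar-Limanov-type computations quoted from \cite{Dubouloz} and \cite{Poloni}), which forces any isomorphism $B_\lambda\to B_{\lambda'}$ to send $x_i\mapsto a_ix'_{\sigma(i)}$ and $y\mapsto\alpha y'+\beta(x'_1,\ldots,x'_n)$ with $\alpha\in\kk^*$; comparing the defining equations then yields $\lambda=\alpha^m\lambda'$, i.e.\ exactly the congruence condition on $\ell,\ell',\ell''$ modulo $m$. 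This is the missing key lemma, not a routine verification.

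Moreover, the substitute you sketch would fail as stated, for two reasons. First, the $x_i$ are not invertible on $Y_j$: the subscheme $\{x_i=0\}\subset Y_j$ is cut out by $y^m+t^{-j}=0$, a nonzero (indeed irreducible, since $\gcd(j,m)=1$) polynomial over $\kk$, hence is nonempty; in fact $\mathcal{O}(Y_j)^*=\kk^*$, so there is no $\Z^n$ of units on which to base a $\mathrm{GL}_n(\Z)$-argument, and the divisor of $x_1\cdots x_n$ is not ``intrinsic up to units.'' (The intrinsic subring $\kk[x_1,\ldots,x_n]$ is recovered in the paper from $\mathbb{G}_a$-actions, not from units.) Second, even granting that an isomorphism preserves the locus $\{x_1\cdots x_n=0\}$, the invariant you propose --- the $\kk$-isomorphism class of the degeneration $\{y^m=-t^{-j}\}$ --- does not remember $t^{-j}$ modulo $(\kk^*)^m$: since $\kk\supset\C$ contains all roots of unity and $-1$ is an $m$-th power, Kummer theory gives $\kk[y]/(y^m+t^{-j})\simeq\kk(t^{1/m})$ for every $j$ coprime with $m$, because $t^{-j}$ and $t^{-1}$ generate the same subgroup of $\kk^*/(\kk^*)^m$. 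So this invariant is the same for all the $Y_j$ under comparison and cannot distinguish them. What is actually needed, and what the LND analysis supplies, is that $y$ itself is canonical up to transformations $y\mapsto\alpha y+\beta(x_1,\ldots,x_n)$ with $\alpha\in\kk^*$; only with that finer rigidity does the comparison force $t^{-\ell}/t^{-\ell'}$ (or $t^{-\ell}/t^{-\ell''}$) to be an $m$-th power, giving the contradiction.
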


\begin{proof}On the one hand, the $\mathbb{A}_{*}^{1}$-cylinders $X_{n,m,\ell}\times\mathbb{A}_{*}^{1}$ and $X_{n,m,\ell'}\times\mathbb{A}_{*}^{1}$ are isomorphic by Proposition \ref{prop:isos-explicites}. On the other hand, the first assertion of the theorem  follows directly from Lemma~\ref{lemma:fibre-generique} and Proposition \ref{prop:Danielewski} below. Indeed, there exists an element $\alpha\in\kk=\C(t)$ such that $t^{-\ell}=\alpha^mt^{-(\pm\ell')}$ if and only if $\ell$ is congruent to $\pm\ell'$ modulo $m$.
\end{proof}

\begin{prop}\label{prop:Danielewski}
Let $n\geq1$ and $m\geq2$ be  integers and let $\mathbf{k}$ be a field of  characteristic zero. Consider, for every $\lambda\in\kk$, the ring 
\[B_{\lambda}=\kk[x_1,\ldots,x_n,y,z]/(x_1^2\cdots x_n^2z-y^m-\lambda).\]
Two such rings $B_{\lambda}$ and $B_{\lambda'}$ are isomorphic if and only if there exists a constant $\alpha\in\kk^*$ such that $\lambda=\alpha^m\lambda'$.
\end{prop}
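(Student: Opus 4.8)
The plan is to prove the two implications separately, the ``if'' direction being elementary and the ``only if'' direction being the substance. For the ``if'' direction, given $\lambda = \alpha^m \lambda'$ with $\alpha \in \kk^*$, I would write down an explicit isomorphism $B_{\lambda'} \to B_{\lambda}$. The scaling $x_i \mapsto x_i$, $y \mapsto \alpha y$, $z \mapsto \alpha^m z$ sends the relation $x_1^2\cdots x_n^2 z - y^m - \lambda'$ to $\alpha^m(x_1^2\cdots x_n^2 z - y^m - \lambda')$, hence sends the ideal $(x_1^2\cdots x_n^2 z - y^m - \lambda')$ to $(x_1^2\cdots x_n^2 z - y^m - \alpha^m\lambda') = (x_1^2\cdots x_n^2 z - y^m - \lambda)$; this is the desired isomorphism.

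For the ``only if'' direction, suppose $\Phi: B_{\lambda'} \xrightarrow{\sim} B_\lambda$ is a $\kk$-algebra isomorphism. The key geometric input is the structure of $\mathrm{Spec}(B_\lambda)$ as a variety over $\kk$: after inverting $x_1\cdots x_n$ it becomes $(\mathbb{A}^1_{*,\kk})^n \times \mathbb{A}^1_\kk$, so it is a ``Danielewski-type'' hypersurface, $\mathbb{A}^1$-fibered over $\mathbb{A}^n_\kk = \mathrm{Spec}(\kk[x_1,\ldots,x_n])$ via projection, with degenerate fibers exactly over the hyperplanes $\{x_i = 0\}$. I would argue that this $\mathbb{A}^1$-fibration is canonical: for instance, it can be recovered from the Makar-Limanov invariant or, more directly, as the fibration whose smooth fibers are the $\mathbb{A}^1$'s and whose divisor of degenerate fibers is intrinsic. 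Either way, $\Phi$ should induce an automorphism $\varphi$ of $\mathbb{A}^n_\kk$ permuting and rescaling the coordinate hyperplanes, i.e. $\varphi^*(x_i) = c_i x_{\sigma(i)}$ for some $\sigma \in S_n$ and $c_i \in \kk^*$ — possibly up to the subtlety that $x_i$ enters squared, so what is really pinned down is $x_i^2$ up to units, still forcing $\varphi^*(x_i^2) = c_i x_{\sigma(i)}^2$. Pulling the relation back through $\Phi$ and comparing, the equation $x_1^2\cdots x_n^2 z' - {y'}^m - \lambda' = 0$ (with $z',y'$ the images of $z,y$) must, after multiplying by the unit $\prod c_i$, match $x_1^2\cdots x_n^2 z - y^m - \lambda$; this forces $y' = \alpha y + (\text{something involving the }x_i)$ and, looking at the part independent of $z$, an identity $-{y'}^m - \lambda' = (\prod c_i)^{-1}(-y^m - \lambda)$ modulo the ideal generated by $x_1^2\cdots x_n^2$. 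Reducing modulo $(x_1,\ldots,x_n)$ kills the $z$-terms and the ambiguity in $y'$, leaving ${y'}^m + \lambda' = (\prod c_i)^{-1}(y^m + \lambda)$ in $\kk[y]$, whence $y' = \alpha y$ with $\alpha^m = (\prod c_i)^{-1}$ and $\lambda' = (\prod c_i)^{-1}\lambda = \alpha^m \lambda$. Then $\lambda = (\alpha^{-1})^{-m}\lambda'$... more cleanly, $\lambda' = \alpha^m\lambda$ gives $\lambda = \alpha^{-m}\lambda'$, so with $\beta = \alpha^{-1}$ we get $\lambda = \beta^m\lambda'$, as required.

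The step I expect to be the main obstacle is establishing rigorously that the $\mathbb{A}^1$-fibration $\mathrm{Spec}(B_\lambda) \to \mathbb{A}^n_\kk$ is preserved, up to an automorphism of the base respecting the coordinate hyperplane arrangement, by \emph{every} abstract $\kk$-isomorphism. For $n = 1$ this is classical Danielewski-surface theory, but for $n \geq 2$ one needs a robust invariant: I would use locally nilpotent derivations / the Makar-Limanov invariant, arguing that $\mathrm{ML}(B_\lambda) = \kk[x_1,\ldots,x_n]$ (the $x_i$ are in the kernel of every LND because they are the ``non-reduced'' coordinates, and $z$ is a partial derivative of $y$ along a suitable triangular LND), so that $\Phi$ must carry $\kk[x_1,\ldots,x_n]$ onto itself; then analyzing how $\Phi$ acts on this subring via the constraint that it must preserve the non-smooth locus $\{x_1\cdots x_n = 0\}$ and the multiplicities (each $x_i$ occurs squared) pins down $\varphi^*(x_i^2) = c_i x_{\sigma(i)}^2$. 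Once that is in hand, the remaining computation — pulling back the defining relation and reducing modulo $(x_1,\ldots,x_n)$ — is the short, purely algebraic argument sketched above.
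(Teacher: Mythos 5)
Your proposal follows essentially the same route as the paper: the authors likewise use locally nilpotent derivations (citing Dubouloz and Poloni for exactly the structural facts you flag as the main obstacle, namely that every nonzero LND on $B_\lambda$ has kernel $\kk[x_1,\ldots,x_n]$, so any isomorphism sends $x_i\mapsto a_i x'_{\sigma(i)}$ and $y\mapsto\alpha y'+\beta(x'_1,\ldots,x'_n)$), and then compare the pulled-back defining equation after setting the $x_i$ and $z$ to zero to extract $\lambda=\alpha^m\lambda'$. The only slip in your sketch is the identification of the proportionality constant with $(\prod c_i)^{-1}$, which is false in general (e.g.\ $x\mapsto cx$, $y\mapsto y$, $z\mapsto c^{-2}z$ has $\alpha=1$), but harmless: leaving the constant undetermined and matching the coefficient of $Y^m$ forces it to equal $\alpha^m$ and forces $\beta(0,\ldots,0)=0$, yielding the same conclusion.
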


\begin{proof}If $\lambda=\alpha^m\lambda'$ for some $\alpha\in\kk^*$, then it is obvious that $B_{\lambda}$ and $B_{\lambda'}$ are isomorphic via a linear change of coordinates. It remains to prove the converse implication. For this, we use techniques from  the theory of locally nilpotent derivations, that were mainly developed by Makar-Limanov and  became progressively classical tools in affine algebraic geometry. Actually, our proof simply recollects arguments that were already given in \cite{Dubouloz} and \cite{Poloni}.   

 Let $\delta$ be a nonzero locally nilpotent derivations on $B_{\lambda}$. From \cite[Proposition 2.2 and Corollary 2.1]{Dubouloz}, which remain valid over any field of  characteristic zero, we have that  $\Ker(\delta)=\C[x_1,\ldots,x_n]$ and $\Ker(\delta^2)\subset\C[x_1,\ldots,x_n,y]$ both hold. Then, arguing exactly as in \cite[Proposition 2.3]{Poloni}, it follows that  $\Ker(\delta^2)=\C[x_1,\ldots,x_n]y+\C[x_1,\ldots,x_n]$ and that 
\[\delta=h(x_1,\ldots,x_n)\left(x_1^2\cdots x_n^2\frac{\partial}{\partial y}+my^{m-1}\frac{\partial}{\partial z}\right),\]
for some $h(x_1,\ldots,x_n)\in\kk[x_1,\ldots,x_n]$.

Now, let $\varphi:B_{\lambda}\to B_{\lambda'}$ be an isomorphism and denote by $x_1,\ldots,x_n,y,z$ (resp.~$x'_1,\ldots,x'_n,y',z'$) the images of $x_1,\ldots,x_n,y,z$ in $B_{\lambda}$ (resp.~in $B_{\lambda'}$). Similarly as in \cite[Proposition 2.5]{Poloni} we can infer from the above properties that there exist nonzero constants $a_1,\ldots,a_n\in\kk^*$, $\alpha\in\kk^*$, a polynomial $\beta\in\kk[X_1,\ldots,X_n]$ and a bijection $\sigma$ of the set $\{1,\ldots,n\}$ such that $\varphi(x_i)=a_i\varphi(x_{\sigma(i)}')$ for all $1\leq i\leq n$ and such that $\varphi(y)=\alpha y'+\beta(x_1',\ldots,x_n')$.

Finally, we write that 
\[\varphi(x_1^2\cdots x_n^2z-y^m-\lambda)=(\prod_{i=1}^na_i)^2(x_1'\cdots x_n')^2\varphi(z)-(\alpha y'+\beta(x_1',\ldots,x_n'))^m-\lambda\]
 is equal to zero in $B_{\lambda'}$. This gives the existence of  polynomials $R$ and $S$ in $\kk[X_1,\ldots,X_n,Y,Z]$ such that
\[(\prod_{i=1}^na_i)^2(X_1\cdots X_n)^2R-(\alpha Y+\beta(X_1,\ldots,X_n))^m-\lambda=S\cdot(X_1^2\cdots X_n^2Z-Y^m-\lambda').\]
From this, we deduce that
\[-(\alpha Y+\beta(0,\ldots,0))^m-\lambda=S(0,\ldots,0,Y,0)\cdot(-Y^m-\lambda'),\]
which implies that $S(0,\ldots,0,Y,0)$ is in fact a nonzero constant. Therefore, we have $\beta(0,\ldots,0)=0$,   $S(0,\ldots,0,Y,0)=\alpha^m$ and thus $\lambda=S(0,\ldots,0,Y,0)\lambda'=\alpha^m\lambda'$, as desired.
\end{proof}

\begin{example}
The simplest case in Theorem \ref{main-thm} holds when $n=1$, $m=5$, $\ell=1$ and $\ell'=2$, in which case we obtain 
that the factorial threefolds in $\mathbb{A}^{4}=\mathrm{Spec}(\mathbb{C}[x,y,z,t])$
defined by the equations $t(x^{2}z-y^5)=1$ and $t^{2}(x^{2}z-y^5)=1$, respectively,
are nonisomorphic but have isomorphic $\mathbb{A}_{*}^{1}$-cylinders.
\end{example}


\section{Affine varieties with nonisomorphic square roots}

In this section, we construct nonisomorphic affine algebraic varieties $X$ and $Y$ with isomorphic cartesian products $X\times X$ and $Y\times Y$. To begin with, we adapt Proposition~\ref{prop:isos-explicites} to obtain isomorphic cartesian products. 

\begin{lemma}\label{lem:squares} Let $n\geq1$ and let $f$ (resp.~$g$) be a regular function on the affine $n$-space $X=\mathbb{A}^n$ which is semi-invariant of weight $m\neq0$ for some action $\mu:\mathbb{G}_{m}\times X\rightarrow X$ (resp.~$\nu:\mathbb{G}_{m}\times X\rightarrow X$) of the multiplicative group on $X$. Let $a,b\geq1$ be integers such that $ab$ is congruent to $1$ modulo $m^2$. Then the products $\widetilde{X}_{f,\ell}\times\widetilde{X}_{g,\ell'}$ and $\widetilde{X}_{f,a\ell}\times\widetilde{X}_{g,b\ell'}$ are isomorphic varieties for all integers $\ell,\ell'\geq1$.
\end{lemma}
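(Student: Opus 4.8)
The plan is to mimic the explicit automorphism constructed in the proof of Proposition~\ref{prop:isos-explicites}, but now on the product $X\times X\times(\mathbb{A}_*^1)^2$, where the two copies of $\mathbb{A}_*^1$ carry coordinates $t$ (for the factor $\widetilde{X}_{f,\bullet}$) and $u$ (for the factor $\widetilde{X}_{g,\bullet}$). Concretely, I would identify $\widetilde{X}_{f,\ell}\times\widetilde{X}_{g,\ell'}$ with the closed subvariety of $X\times X\times(\mathbb{A}_*^1)^2=\mathrm{Spec}(\mathcal{O}(X)\otimes\mathcal{O}(X)[t^{\pm1},u^{\pm1}])$ cut out by the pair of equations $t^{\ell}f(x)=1$ and $u^{\ell'}g(y)=1$, and similarly $\widetilde{X}_{f,a\ell}\times\widetilde{X}_{g,b\ell'}$ by $t^{a\ell}f(x)=1$, $u^{b\ell'}g(y)=1$. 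The idea is that raising $t$ to a suitable power turns the first equation into the $a\ell$-version while simultaneously a correction by the $\mathbb{G}_m$-action $\mu$ on the $x$-variables fixes up the semi-invariant; the condition $ab\equiv1\pmod{m^2}$ is exactly what is needed to make the exponents match integrally when one tries to do this for both factors at once.

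The key steps, in order: (1) Write $ab=1+km^2$ for some $k\in\Z$. (2) Define a morphism $\Phi$ on $X\times X\times(\mathbb{A}_*^1)^2$ of the form $(x,y,t,u)\mapsto(\mu(u^{r},x),\,\nu(t^{s},y),\,t^{a}u^{e},\,t^{c}u^{b})$ for integers $r,s,e,c$ to be determined, and check that $\Phi$ is an automorphism of $X\times X\times(\mathbb{A}_*^1)^2$: this amounts to checking that the $2\times2$ integer matrix $\bigl(\begin{smallmatrix}a&e\\c&b\end{smallmatrix}\bigr)$ governing the torus coordinates lies in $\mathrm{GL}_2(\Z)$, i.e.\ has determinant $\pm1$, which forces a relation like $ab-ce=\pm1$; choosing $e$ and $c$ to be appropriate multiples of $m$ (so that $ce=km^2=ab-1$) does the job and keeps the power corrections in the $\mathbb{G}_m$-actions integral. (3) Compute $\Phi^*$ of the defining equations: using that $f$ is semi-invariant of weight $m$ for $\mu$ and $g$ of weight $m$ for $\nu$, one gets $\Phi^*(t^{a\ell}f(x)-1)$ and $\Phi^*(u^{b\ell'}g(y)-1)$ as unit multiples of $t^{\ell}f(x)-1$ and $u^{\ell'}g(y)-1$ respectively (after absorbing the $u$-weight of $\mu(u^r,x)$ into the $t$-exponent and vice versa), so that $\Phi$ restricts to an isomorphism $\widetilde{X}_{f,\ell}\times\widetilde{X}_{g,\ell'}\xrightarrow{\sim}\widetilde{X}_{f,a\ell}\times\widetilde{X}_{g,b\ell'}$.

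I expect the main obstacle to be the bookkeeping in step (2)--(3): pinning down the exponents $r,s,e,c$ so that (a) the torus part is invertible over $\Z$, (b) the $\mathbb{G}_m$-action corrections use only integer powers, and (c) pulling back the two binomial equations produces exactly the desired target equations up to units. This is the same kind of "straightforward computation" invoked in Proposition~\ref{prop:isos-explicites}, but with two coupled factors the congruence $ab\equiv1\pmod{m^2}$ (rather than $\pmod m$) is essential: one factor of $m$ is spent making the $t$-weight shift in the first equation integral, and the second factor of $m$ is spent making the reciprocal $u$-weight shift in the second equation integral. Once the exponents are chosen consistently, verifying that $\Phi$ does what is claimed is a direct substitution, so the heart of the argument is really the elementary linear-algebra/number-theory choice of the matrix in $\mathrm{GL}_2(\Z)$ together with the compensating powers of $\mu$ and $\nu$.
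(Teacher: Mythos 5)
Your construction is essentially the paper's own proof: the paper takes the matrix $\bigl(\begin{smallmatrix}a&cm\\ m&b\end{smallmatrix}\bigr)\in\mathrm{SL}_{2}(\mathbb{Z})$ (available precisely because $ab\equiv1\bmod m^{2}$), the torus automorphism $\sigma(t,s)=(t^{a}s^{cm},t^{m}s^{b})$, and the ambient automorphism $(x,y,t,s)\mapsto(\mu(s^{-c\ell},x),\nu(t^{-\ell'},y),\sigma(t,s))$, which is exactly your scheme with $e,c$ multiples of $m$ satisfying $ec=ab-1$ and the twists $r=-e\ell/m$, $s=-c\ell'/m$. The only slip is the orientation in your step (3): for a map with $t\mapsto t^{a}u^{e}$ one gets $\Phi^{*}(t^{\ell}f-1)=t^{a\ell}f-1$ and $\Phi^{*}(u^{\ell'}g-1)=u^{b\ell'}g-1$ (your stated pullbacks would force $a^{2}\ell=\ell$), so $\Phi$ carries $\widetilde{X}_{f,a\ell}\times\widetilde{X}_{g,b\ell'}$ isomorphically onto $\widetilde{X}_{f,\ell}\times\widetilde{X}_{g,\ell'}$ rather than the direction you wrote --- harmless, since only the existence of an isomorphism is claimed.
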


\begin{proof} Let us identify the products $\Pi_1=\widetilde{X}_{f,\ell}\times\widetilde{X}_{g,\ell'}$ and $\Pi_2=\widetilde{X}_{f,a\ell}\times\widetilde{X}_{g,b\ell'}$ with the closed subvarieties of $(X\times\mathbb{A}_{*}^{1})^2=\mathrm{Spec}(\C[x_1\ldots,x_n,y_1,\ldots,y_n][t^{\pm1},s^{\pm1}])$
defined  by the ideals 
$$I_1=\left(t^{\ell}f(x_1,\ldots,x_n)-1, s^{\ell'}g(y_1,\ldots,y_n)-1\right)$$ 
and 
$$I_2=\left(t^{a\ell}f(x_1,\ldots,x_n)-1, s^{b\ell'}g(y_1,\ldots,y_n)-1\right),$$ respectively.

Since $ab$ is congruent to $1$ modulo $m^2$, there exists an integer $c\in\Z$ such that the matrix $\begin{pmatrix}a&cm\\m&b\end{pmatrix}$ belongs to $\mathrm{SL}_{2}(\mathbb{Z})$. This matrix corresponds to an automorphism $\sigma(t,s)=(t^{a}s^{cm},t^{m}s^{b})$ of the torus $(\mathbb{A}_{*}^{1})^{2}=\mathrm{Spec}(\mathbb{C}[t^{\pm1},s^{\pm1}])$. Then, it suffices to remark that the  automorphism of $(X\times\mathbb{A}_{*}^{1})^{2}$
defined by $(x,y,t,s)\mapsto(\mu(s^{-c\ell},x), \nu(t^{-\ell'},y), \sigma(t,s))$ maps
$\Pi_2$ isomorphically onto
$\Pi_1$.
\end{proof}

\begin{cor}\label{cor:squares}
Let $X=\mathbb{A}^n$ and let $f\in\mathcal{O}(X)$ be semi-invariant of weight $m\neq0$ for a $\mathbb{G}_{m}$-action   on $X$. Suppose that there exist integers $a,b\geq1$ satisfying the two following congruences 
\[a+b\equiv0\mod(m)\quad\text{and}\quad ab\equiv1\mod(m^2).\]
Then, the varieties $\widetilde{X}_{f,1}\times\widetilde{X}_{f,1}$ and  $\widetilde{X}_{f,a}\times\widetilde{X}_{f,a}$ are isomorphic.
\end{cor}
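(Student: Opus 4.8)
The plan is to deduce the statement by combining Lemma~\ref{lem:squares} with the first assertion of Proposition~\ref{prop:isos-explicites}, both applied to the single semi-invariant $f$.

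First I would invoke Lemma~\ref{lem:squares} with $g=f$, $\nu=\mu$ and $\ell=\ell'=1$. Since the hypothesis provides integers $a,b\geq1$ with $ab\equiv1\pmod{m^2}$, the lemma yields an isomorphism
\[
\widetilde{X}_{f,1}\times\widetilde{X}_{f,1}\;\simeq\;\widetilde{X}_{f,a}\times\widetilde{X}_{f,b}.
\]
It therefore suffices to produce an isomorphism $\widetilde{X}_{f,b}\simeq\widetilde{X}_{f,a}$: crossing it with the identity on $\widetilde{X}_{f,a}$ gives $\widetilde{X}_{f,a}\times\widetilde{X}_{f,b}\simeq\widetilde{X}_{f,a}\times\widetilde{X}_{f,a}$, and composing the two isomorphisms concludes.

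For the remaining isomorphism, observe that the congruence hypothesis $a+b\equiv0\pmod m$ says precisely that $b$ is congruent to $-a$ modulo $m$. Hence the first assertion of Proposition~\ref{prop:isos-explicites} applies --- or, more concretely, the explicit map $(x,t)\mapsto(\mu(t^{k},x),t^{-1})$ exhibited in its proof, with $k\in\Z$ chosen so that $b=-a+km$, is an isomorphism $\widetilde{X}_{f,b}\xrightarrow{\ \sim\ }\widetilde{X}_{f,a}$. This completes the argument.

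I do not anticipate a genuine obstacle here, as the statement is a formal consequence of the two preceding results; the only point requiring care is the bookkeeping of the two congruence conditions --- $ab\equiv1\pmod{m^2}$ feeding Lemma~\ref{lem:squares} and $a+b\equiv0\pmod m$ feeding Proposition~\ref{prop:isos-explicites} --- together with the remark that $a,b\geq1$ ensures all the varieties involved are among the $\widetilde{X}_{f,\ell}$ with $\ell\geq1$ for which these results were stated.
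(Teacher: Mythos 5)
Your proposal is correct and follows exactly the paper's own argument: Lemma~\ref{lem:squares} (with $g=f$, $\ell=\ell'=1$) gives $\widetilde{X}_{f,1}\times\widetilde{X}_{f,1}\simeq\widetilde{X}_{f,a}\times\widetilde{X}_{f,b}$, and since $a+b\equiv0\pmod m$ means $b\equiv-a\pmod m$, the first assertion of Proposition~\ref{prop:isos-explicites} identifies $\widetilde{X}_{f,b}$ with $\widetilde{X}_{f,a}$. Nothing is missing; the bookkeeping of the two congruences is exactly as in the paper.
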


\begin{proof}
On one hand, we have that $\widetilde{X}_{f,1}\times\widetilde{X}_{f,1}$ is isomorphic to $\widetilde{X}_{f,a}\times\widetilde{X}_{f,b}$ by Lemma \ref{lem:squares}. On the other hand, it follows from Proposition \ref{prop:isos-explicites} that $\widetilde{X}_{f,a}$ and $\widetilde{X}_{f,b}$ are isomorphic. 
\end{proof}

Combining the above corollary with the results of the previous sections, we obtain examples of nonisomorphic affine varieties whose square are isomorphic. The simplest case occurs for $m=5$. Let us denote, as in Theorem~\ref{main-thm}, by $X_{n,m,\ell}$ the hypersurface of $\mathbb{A}^{n+3}$ defined by the equation $t^{\ell}\left(x_{1}^{2}\cdots x_{n}^{2} z-y^{m}\right)=1$.

\begin{prop}  The varieties  $X=X_{n,5,1}$ and $Y=X_{n,5,2}$ are not isomorphic, although their squares $X\times X$ and $Y\times Y$ are isomorphic.
\end{prop}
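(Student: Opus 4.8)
The plan is to deduce the statement directly from Corollary~\ref{cor:squares} together with Theorem~\ref{main-thm}. First I would check the hypotheses of Corollary~\ref{cor:squares} for $m=5$: we need integers $a,b\geq 1$ with $a+b\equiv 0\pmod 5$ and $ab\equiv 1\pmod{25}$. Taking $a=2$ and $b=13$ works, since $2+13=15\equiv 0\pmod 5$ and $2\cdot 13=26\equiv 1\pmod{25}$. Since $f_{n,5}=x_1^2\cdots x_n^2 z-y^5$ is semi-invariant of weight $m=5$ for the linear $\mathbb{G}_m$-action on $\mathbb{A}^{n+2}$ with weights $(1,\ldots,1,1,5-2n)$, Corollary~\ref{cor:squares} applies and gives that $X_{n,5,1}\times X_{n,5,1}$ is isomorphic to $X_{n,5,2}\times X_{n,5,2}$, i.e.\ $X\times X\simeq Y\times Y$.

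It then remains only to see that $X=X_{n,5,1}$ and $Y=X_{n,5,2}$ are not isomorphic. This is exactly an instance of Theorem~\ref{main-thm} with $m=5$, $\ell=1$, $\ell'=2$: both $\ell=1$ and $\ell'=2$ are coprime with $5$, and $\ell'=2$ is congruent neither to $\ell=1$ nor to $-\ell=-1\equiv 4$ modulo $5$, so the hypotheses of the theorem are met and it yields that $X_{n,5,1}$ and $X_{n,5,2}$ are nonisomorphic. (They are moreover smooth factorial affine varieties of dimension $n+2$ by Lemma~\ref{lemma:factorial}.) Combining the two parts finishes the proof.

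I do not expect any real obstacle here, as the statement is a clean corollary of machinery already established: the only things to verify are the two elementary congruences for $m=5$ and the non-congruence condition $2\not\equiv\pm 1\pmod 5$. The one point worth stating carefully is the choice of the auxiliary integer $b=13$ (or any $b\equiv 13\pmod{25}$), to make the application of Corollary~\ref{cor:squares} explicit; everything else is a direct citation of Theorem~\ref{main-thm}.

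\begin{proof}
Set $m=5$, $a=2$ and $b=13$. Then $a+b=15\equiv 0\pmod 5$ and $ab=26\equiv 1\pmod{25}$, so the hypotheses of Corollary~\ref{cor:squares} are satisfied for the polynomial $f=f_{n,5}=x_1^2\cdots x_n^2 z-y^5$, which is semi-invariant of weight $m=5$ for the indicated linear $\mathbb{G}_m$-action on $\mathbb{A}^{n+2}$. Corollary~\ref{cor:squares} therefore yields an isomorphism $X\times X=X_{n,5,1}\times X_{n,5,1}\simeq X_{n,5,2}\times X_{n,5,2}=Y\times Y$. On the other hand, $\ell=1$ and $\ell'=2$ are both coprime with $5$, and $2$ is congruent neither to $1$ nor to $-1\equiv 4$ modulo $5$; hence Theorem~\ref{main-thm} guarantees that $X=X_{n,5,1}$ and $Y=X_{n,5,2}$ are nonisomorphic (smooth factorial affine) varieties of dimension $n+2$. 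This proves the proposition.
\end{proof}
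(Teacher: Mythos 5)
Your proof is correct and follows exactly the paper's own argument: the isomorphism of squares via Corollary~\ref{cor:squares} with $m=5$, $a=2$, $b=13$, and the nonisomorphism of $X_{n,5,1}$ and $X_{n,5,2}$ as the case $\ell=1$, $\ell'=2$ of Theorem~\ref{main-thm}. The explicit verification of the congruences is a welcome but minor addition.
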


\begin{proof} We deduce that $X\times X$ and $Y\times Y$ are isomorphic from  Corollary \ref{cor:squares} with  $m=5$, $a=2$ and $b=13$. The fact that $X$ and $Y$ are not isomorphic is a particular case of Theorem~\ref{main-thm}.
\end{proof}

Corollary \ref{cor:squares} allows us to find also two-dimensional examples. In particular, the two  nonisomorphic surfaces of Proposition \ref{prop:case25} have isomorphic squares.

\begin{prop} Let $X_1, X_3\subset\mathbb{A}^3=\mathrm{Spec}(\mathbb{C}[x,y,t])$ be the (nonisomorphic by Proposition \ref{prop:case25}) hypersurfaces defined by the equation $t(x^{2}+y^{5})=1$ and $t^3(x^{2}+y^{5})=1$, respectively. Then,  $X_1\times X_1$ and $X_3\times X_3$ are isomorphic.  
\end{prop}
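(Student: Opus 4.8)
The plan is to apply Corollary~\ref{cor:squares} to the polynomial $f=x^{2}+y^{5}\in\C[x,y]$, viewed as a semi-invariant of weight $m=5$ for the linear $\mathbb{G}_m$-action on $\mathbb{A}^2$ with weights $(5,2)$. With the notation of the previous sections, $X_1=\widetilde{X}_{f,1}$ and $X_3=\widetilde{X}_{f,3}$, so it suffices to exhibit integers $a,b\geq1$ with $a+b\equiv0\pmod 5$, $ab\equiv1\pmod{25}$, and such that $\widetilde{X}_{f,a}\times\widetilde{X}_{f,a}$ is isomorphic to $\widetilde{X}_{f,3}\times\widetilde{X}_{f,3}$. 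The last point is handled by Proposition~\ref{prop:isos-explicites}(2): if $\gcd(a,5)=\gcd(3,5)=1$ — equivalently $5\nmid a$ — then $\widetilde{X}_{f,a}$ and $\widetilde{X}_{f,3}$ have isomorphic $\mathbb{A}^1_*$-cylinders, but here we want an isomorphism of the varieties themselves; the clean way is instead to take $a\equiv\pm3\pmod 5$ and invoke Proposition~\ref{prop:isos-explicites}(1), which gives $\widetilde{X}_{f,a}\simeq\widetilde{X}_{f,3}$ outright.

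First I would search for the pair $(a,b)$. The congruence $ab\equiv1\pmod{25}$ forces $b\equiv a^{-1}\pmod{25}$, and then $a+b\equiv0\pmod5$ becomes $a+a^{-1}\equiv0\pmod 5$, i.e.~$a^2\equiv-1\pmod 5$, so $a\equiv\pm2\pmod 5$. Taking $a=3$ itself does not satisfy $a^2\equiv-1$, so one picks $a\equiv 2\pmod5$ with $a\equiv\pm3\pmod5$ — but $2\not\equiv\pm3\pmod5$ unless we note $-3\equiv2\pmod5$, so in fact $a\equiv2\equiv-3\pmod5$ works perfectly: then $\widetilde{X}_{f,a}\simeq\widetilde{X}_{f,3}$ by Proposition~\ref{prop:isos-explicites}(1). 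The smallest convenient choice is $a=2$, and then $b$ is the inverse of $2$ modulo $25$, namely $b=13$, with $a+b=15\equiv0\pmod5$ and $ab=26\equiv1\pmod{25}$.

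With $a=2$, $b=13$ in hand, the argument is a short chain: by Corollary~\ref{cor:squares}, $\widetilde{X}_{f,1}\times\widetilde{X}_{f,1}\simeq\widetilde{X}_{f,2}\times\widetilde{X}_{f,2}$; by Proposition~\ref{prop:isos-explicites}(1) applied with $\ell=2$, $\ell'=3$, $m=5$ (since $2\equiv-3\pmod 5$), we have $\widetilde{X}_{f,2}\simeq\widetilde{X}_{f,3}$, hence $\widetilde{X}_{f,2}\times\widetilde{X}_{f,2}\simeq\widetilde{X}_{f,3}\times\widetilde{X}_{f,3}$; combining, $X_1\times X_1\simeq X_3\times X_3$. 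That the two surfaces are themselves nonisomorphic is exactly Proposition~\ref{prop:case25}, which is quoted in the statement. There is essentially no obstacle here — the only thing requiring a moment's care is the bookkeeping of congruences, in particular noticing that $2$ and $3$ are negatives of each other modulo $5$ so that Corollary~\ref{cor:squares} (which naturally produces $\widetilde{X}_{f,a}\times\widetilde{X}_{f,a}$ with $a\equiv-b\bmod m$) lands us on $X_3$ rather than on some unrelated member of the family.
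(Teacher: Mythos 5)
There is a genuine error at the very start of your argument: the polynomial $f=x^{2}+y^{5}$ is \emph{not} semi-invariant of weight $5$ for the linear $\mathbb{G}_m$-action with weights $(5,2)$; under $\mu(\lambda,(x,y))=(\lambda^{5}x,\lambda^{2}y)$ one has $f\mapsto\lambda^{10}f$, so the weight is $m=10$ (as stated in the proof of Proposition \ref{prop:case25}), and indeed no effective action can make $x^2+y^5$ semi-invariant of weight $5$, since homogeneity forces $2w_1=5w_2$, hence $10\mid m$. This wrong modulus invalidates both steps of your chain. First, Corollary \ref{cor:squares} applied with the true weight $m=10$ requires $a+b\equiv0\pmod{10}$ and $ab\equiv1\pmod{100}$; your pair $(a,b)=(2,13)$ satisfies neither ($15\not\equiv0\pmod{10}$, $26\not\equiv1\pmod{100}$), so the corollary does not give $\widetilde{X}_{f,1}\times\widetilde{X}_{f,1}\simeq\widetilde{X}_{f,2}\times\widetilde{X}_{f,2}$. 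Second, Proposition \ref{prop:isos-explicites}(1) yields $\widetilde{X}_{f,2}\simeq\widetilde{X}_{f,3}$ only if $3\equiv\pm2\pmod{10}$, which is false; so the isomorphism you use to pass from $\widetilde{X}_{f,2}$ to $X_3$ is unjustified (and note $\gcd(2,10)\neq\gcd(3,10)$, so none of the paper's tools even suggests these two surfaces are isomorphic).

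The repair is exactly your own bookkeeping carried out modulo $10$ instead of modulo $5$: solving $a+a^{-1}\equiv0\pmod{10}$, i.e.\ $a^{2}\equiv-1\pmod{10}$, gives $a\equiv3$ or $7\pmod{10}$, and the choice $a=3$, $b=67$ satisfies $a+b=70\equiv0\pmod{10}$ and $ab=201\equiv1\pmod{100}$. With this choice Corollary \ref{cor:squares} lands directly on $\widetilde{X}_{f,3}=X_3$, so no auxiliary isomorphism between different members of the family is needed; this is precisely the paper's proof.
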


\begin{proof} It suffices to apply Corollary \ref{cor:squares} with  $m=10$, $a=3$ and $b=67$.
\end{proof}

Finally, let us remark that such phenomenon can not occur for affine curves.

\begin{prop}\label{prop:carre-courbes} Two smooth complex affine curves $C_1$ and $C_2$ have isomorphic squares $C_1\times C_1\simeq C_2\times C_2$ if and only if they are isomorphic. 
\end{prop}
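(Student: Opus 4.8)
The plan is to reduce the statement to the known fact that smooth affine curves are determined up to isomorphism by their coordinate rings together with some invariant that linearizes products — concretely, the divisor class group or, more usefully here, the structure of the curve as encoded by its smooth projective model and its points at infinity. Since a smooth affine curve $C$ is $C=\overline{C}\setminus S$ for a unique smooth projective curve $\overline{C}$ and a nonempty finite set $S$, an isomorphism $C_1\times C_1\simeq C_2\times C_2$ should be shown to force $(\overline{C}_1,S_1)\cong(\overline{C}_2,S_2)$, hence $C_1\cong C_2$. The key structural input is a unique factorization / cancellation principle for products of curves.

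First I would dispose of the rational case separately. If one of the $C_i$ is rational, its coordinate ring has nonconstant units or is a localization of $\C[x]$; in all such cases $C_i$ is of the form $\mathbb{A}^1$ or $\mathbb{A}^1$ minus finitely many points, and such curves have negative or zero logarithmic Kodaira dimension. For these one can argue directly: $\mathbb{A}^1\times\mathbb{A}^1\simeq C_2\times C_2$ forces (by comparing $\kappa$, which is additive under products over $\C$, and unit groups) $C_2\simeq\mathbb{A}^1$; more generally a rational curve $C_1$ with $r$ punctures gives $C_1\times C_1$ with $\kappa=0$ or $-\infty$ and unit group $\C^*\times\Z^{2r-2}$ (after modding out constants), so the number of punctures is recovered, and rational curves with the same number of punctures are isomorphic only when that number is $0$ or $1$ — but for $r\geq 2$ one must instead use that the log-canonical fibration of $C_1\times C_1$ recovers $C_1$ up to the finitely many rational curves with $r$ points removed, which are not all isomorphic, so a finer argument via the boundary structure at infinity of a completion of $C_1\times C_1$ is needed. (This is precisely the point the acknowledgment flags as delicate.)

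For curves of nonnegative genus — in particular those of log-general type, i.e.\ $2g-2+\#S>0$ — I would invoke the strong cancellation theorem of Iitaka and Fujita \cite{IF} as used repeatedly above: since $C_i$ is not $\mathbb{A}^1_*$-uniruled (indeed not $\mathbb{A}^1$-ruled), any isomorphism $C_1\times C_1\simeq C_2\times C_2$ descends — after identifying one factor with a fiber of the appropriate projection — to an isomorphism $C_1\simeq C_2$. More precisely, one chooses on $C_1\times C_1$ the two projections; an arbitrary isomorphism $\Psi$ need not respect them, but because $C_1$ has nonnegative logarithmic Kodaira dimension and $\dim=1$, the only $\mathbb{A}^1$- or $\mathbb{A}^1_*$-fibrations up to the structure are constrained, and Iitaka–Fujita then yields that $\Psi$ induces $C_1\simeq C_2$. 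The remaining case $g=0$ with $\#S\geq 2$ (log Kodaira dimension $0$ when $\#S=2$, i.e.\ $C_i\simeq\mathbb{A}^1_*$, which is a torus and handled by \cite[Lemma 4.5]{BD}; and log-general type when $\#S\geq 3$) falls again under the log-general-type argument, except that $\mathbb{P}^1$ minus $r\geq 3$ points is an honest example where one must check two such curves with isomorphic squares are isomorphic — here I would pass to the smooth projective completion $V$ of $C_1\times C_1$, analyze the (minimal) SNC boundary, and show the two rulings and their $r$ degenerate fibers each recover the cross-ratio-type data of $S_1$, so that $(\mathbb{P}^1,S_1)\simeq(\mathbb{P}^1,S_2)$.

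The hard part will be exactly the last reduction: handling rational curves with $\geq 3$ punctures, where logarithmic Kodaira dimension alone does not pin down the curve and one genuinely needs the combinatorics of the boundary of a completion of $C_1\times C_1$ (equivalently, the configuration of the two pencils of "horizontal'' and "vertical'' curves) to reconstruct the punctured $\mathbb{P}^1$ up to projective equivalence. I expect the proof to treat the log-general-type and positive-genus cases uniformly via Iitaka–Fujita cancellation \cite{IF}, and to spend its real effort on the explicit surface geometry of $C_1\times C_1$ for $C_1$ a multiply-punctured line, which is presumably why Blanc and Furter's input is credited.
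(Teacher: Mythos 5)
Your proposal has two genuine gaps. First, the main reduction is not a cancellation situation: in $C_1\times C_1\simeq C_2\times C_2$ there is no common factor, so the Iitaka--Fujita strong cancellation theorem \cite{IF} (which cancels a \emph{fixed} factor such as $\mathbb{A}^1$, or $\mathbb{A}^1_*$ in the variants used earlier in the paper) does not apply as stated; the step ``Iitaka--Fujita then yields that $\Psi$ induces $C_1\simeq C_2$'' is an appeal to a theorem about a different problem, not an argument. What you would actually need is something like uniqueness of the decomposition of a surface into a product of curves of log-general type, which you neither state nor prove. Second, you explicitly leave open the case of rational curves with at least three punctures, offering only a plan (recovering cross-ratio data of the punctures from the boundary of an SNC completion of $C_1\times C_1$ and its two rulings) rather than a proof; this is precisely the case where logarithmic Kodaira dimension and unit groups do not pin down the curve, so the statement remains unproved there as well. (Incidentally, a punctured rational curve is determined by its number $r$ of punctures for all $r\le 3$, not only $r\le 1$, since $\mathrm{PGL}_2$ is $3$-transitive.)

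The paper's proof avoids both difficulties with one elementary, uniform idea: restrict a projection of $C_2\times C_2$ to the image under the isomorphism of a fiber of a projection of $C_1\times C_1$; this gives a dominant morphism $\pi_2:C_1\to C_2$, and symmetrically $\pi_1:C_2\to C_1$. Extending to smooth projective models and applying Riemann--Hurwitz shows $\overline{C}_1$ and $\overline{C}_2$ have the same genus $g$, and a K\"unneth computation of $H_2(C_i\times C_i;\Z)$ shows they have the same number $r$ of punctures. Riemann--Hurwitz (together with the condition that boundary points map to boundary points) then forces the extended maps to have degree $1$ whenever $g\geq 2$, $g=1$, or $g=0$ with $r\geq 3$; having degree-one maps in both directions gives open immersions that must be surjective, hence isomorphisms by Zariski's Main Theorem. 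The only cases where higher degree survives are $g=0$ with $r\in\{1,2\}$, where $C_1\simeq C_2\simeq\mathbb{A}^1$ or $\mathbb{A}^1_*$ automatically. Note in particular that this two-way dominant-morphism argument dispatches the multiply-punctured $\mathbb{P}^1$ case in a few lines, which is exactly where your plan stalls.
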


\begin{proof}
Suppose that there exists an isomorphism $\varphi:C_{1}\times C_{1}\stackrel{\sim}{\rightarrow}C_{2}\times C_{2}$.
Then the restriction of the first projection $\mathrm{pr}_{1}:C_{2}\times C_{2}\rightarrow C_{2}$
to the image by $\varphi$ of a fiber of the first or the second projection
$\mathrm{pr}_{i}:C_{1}\times C_{1}\rightarrow C_{1}$, $i=1,2$, defines
a dominant morphism $\pi_{2}:C_{1}\rightarrow C_{2}$. Exchanging
the roles of $C_{1}$ and $C_{2}$, we obtain in a similar way a dominant
morphism $\pi_{1}:C_{2}\rightarrow C_{1}$. These extend to finite
morphisms $\overline{\pi}_{i}:\overline{C}_{j}\rightarrow\overline{C}_{i}$
between the smooth projective models of $C_{1}$ and $C_{2}$, and
we deduce from Riemann-Hurwitz formula that $\overline{C}_{1}$ and
$\overline{C}_{2}$ have the same genus $g$. Since $C_{i}$ is affine,
$\overline{C}_{i}\setminus C_{i}$ is non-empty, consisting of a finite
number of points $p_{i,j}$, $j=1,\ldots,r_{i}$, $i=1,2$. By Kunneth
formula, 
\[
H_{2}(C_{i}\times C_{i};\mathbb{Z})\simeq H_{1}(C_{i};\mathbb{Z})\otimes H_{1}(C_{i};\mathbb{Z})\simeq \mathbb{Z}^{2g+r_i-1}\otimes \mathbb{Z}^{2g+r_i-1},
\]
 where we have used that $H_{2}(C_{i};\mathbb{Z})=0$ because $C_{i}$
is affine. Thus $r_{1}=r_{2}=r\geq1$. 

If $g\geq2$ then Riemann-Hurwitz formula actually implies that $\overline{\pi}_{1}$
and $\overline{\pi}_{2}$ are isomorphisms. Thus $\pi_{2}:C_{1}\rightarrow C_{2}$
and $\pi_{1}:C_{2}\rightarrow C_{1}$are both open immersions and
are both surjective for otherwise $\pi_{1}\circ\pi_{2}$ would be
a strict open embedding of $C_{1}$ into itself, which is impossible.
So $\pi_{1}:C_{2}\rightarrow C_{1}$ and $\pi_2$ are isomorphisms by virtue of
Zariski Main Theorem. 

If $g=1$, then by Riemann-Hurwitz formula again, $\overline{\pi}_{1}:\overline{C}_{2}\rightarrow\overline{C}_{1}$
is a finite unramified morphism, say of degree $d\geq1$. Since $\overline{\pi}_{1}$
is the extension of a morphism $\pi_{1}:C_{2}\rightarrow C_{1}$,
$\overline{\pi}_{1}^{-1}(\overline{C}_{1}\setminus C_{1})\subset\overline{C}_{2}\setminus C_{2}$
and so $dr=d\cdot\sharp(\overline{C}_{1}\setminus C_{1})\leq\sharp(\overline{C}_{2}\setminus C_{2})=r$.
Thus $d=1$, and the conclusion follows from the same argument as
above.

If $g=0$, then $\overline{\pi}_{1}:\overline{C}_{2}\simeq\mathbb{P}^{1}\rightarrow\overline{C}_{1}\simeq\mathbb{P}^{1}$
is a finite morphism of degree $d\geq1$. If $d=1$, then by the same
argument again, $\pi_{1}:C_{2}\rightarrow C_{1}$ is an isomorphism.
Otherwise, if $d>1$, then since $\overline{\pi}_{1}^{-1}(\overline{C}_{1}\setminus C_{1})\subset\overline{C}_{2}\setminus C_{2}$
and $\sharp(\overline{C}_{1}\setminus C_{1})=\sharp(\overline{C}_{2}\setminus C_{2})=r\geq1$,
it must be that $\overline{\pi}_{1}$ is totally ramified over every
point of $\overline{C}_{1}\setminus C_{1}$, with $\overline{\pi}_{1}(\overline{C}_{2}\setminus C_{2})=\overline{C}_{1}\setminus C_{1}$.
By Riemann-Hurwitz formula, we have 
\[
2=2d-\sum_{p\in\overline{C}_{1}\setminus C_{1}}(d-1)-\delta=2d-r(d-1)-\delta
\]
for some $\delta\geq0$. Rewriting this equality in the form $(d-1)(2-r)=\delta\geq0$,
we conclude that either $r=1$, in which case $C_{1}\simeq C_{2}\simeq\mathbb{A}^{1}$,
or $r=2$ and then $C_{1}\simeq C_{2}\simeq\mathbb{A}_{*}^{1}$.  
\end{proof}


\end{document}